\documentclass[12pt]{amsart}
\usepackage[english]{babel}
\usepackage[utf8]{inputenc} 
\usepackage[T1]{fontenc}
 
\usepackage[margin=1.5in]{geometry}

\usepackage[colorinlistoftodos]{todonotes}
\usepackage[colorlinks=true, allcolors=blue]{hyperref}

\usepackage[alphabetic]{amsrefs}

\usepackage[makeroom]{cancel}
\usepackage{setspace}
\usepackage{amsfonts} 
\usepackage{amssymb}
\usepackage{amsthm} 
\usepackage{indentfirst}
\usepackage{enumerate}
 \usepackage{enumitem}   
\usepackage{amsmath}
\usepackage{graphicx}

\usepackage{mathabx}
\graphicspath{ {./images/} }

\usepackage{enumitem}
\usepackage[most]{tcolorbox}

\usepackage{tikz-cd}

\definecolor{myGreen}{RGB}{10 100 10}

\newcommand{\bC}{\mathbb{C}}

\newcommand{\bR}{\mathbb{R}}

\newcommand{\bF}{\mathbb{F}}
\newcommand{\bN}{\mathbb{N}}

\newcommand{\bA}{\mathbb{A}}

\newcommand{\cM}{\mathcal{M}}

\newcommand*{\tran}{\mathsf{T}}

\newtheorem{theorem}{Theorem}[section]

\newtheorem{prop}[theorem]{Proposition}

\newtheorem{cor}[theorem]{Corollary}
\newtheorem{thm}[theorem]{Theorem}
\newtheorem{lem}[theorem]{Lemma}

\newtheorem*{cor*}{Corollary}
\newtheorem*{thm*}{Theorem}
\newtheorem*{lem*}{Lemma}

\newtheorem*{prop*}{Proposition}

\theoremstyle{definition}

\newtheorem*{defn*}{Definition}

\theoremstyle{remark}

\title[Regularity of compact convex sets--classical and nc]{\textbf{Regularity of compact convex sets--classical and noncommutative}}

\author{David P. Blecher}
	
\address{Department of Mathematics, University of Houston, Houston, TX 77204-3008.}
	
\email{dpbleche@central.uh.edu}

\date{Revision of 9/5/2026.   To appear J. Complex Analysis and Operator Theory}
 \subjclass[2020]{47A20, 46A55, 47L07; Secondary 46L51, 46L52, 47L05, 47L25, 47L30}

\begin{document}
    \begin{abstract}     The classical theory of regularity of embeddings of compact convex sets
    was developed in the 1970s, exclusively in the real case, and even there it does not appear to have been stated in its simplest form. 
    We begin by revisiting this setting, showing that under a reasonable condition, every locally convex topological vector space
    $E$  that contains and is spanned by a compact convex set lying in a hyperplane not passing through the origin, is a (specific) 
dual Banach space equipped  with the weak* topology.      Second, we establish the corresponding regularity theory for convex sets in complex LCTVS's. Third, we develop a theory of regular embeddings for complex noncommutative convex sets, in the sense of Davidson and Kennedy. Finally, we use the complex theory to derive a theory of regular embeddings for real noncommutative convex sets. Interestingly, at present there appears to be no direct route to the latter.   \end{abstract} \maketitle

\section{Introduction} 
The classical theory of regularity of embeddings of a compact convex set $K$ 
    was developed over sixty years ago. Regular embeddings of  $K$ in an LCTVS $E$  are particularly well-behaved affine embeddings.  Every compact convex 
    subset of an LCTVS has a regular embedding in some possibly different LCTVS.
    One of their principal features is that they allow
    $E$ to be  replaced by a topologically equivalent LCTVS which is nicer  from a functional analytic perspective, thereby abstracting  the convex set from its particular ambient space while still retaining its essential convexity properties. For many purposes, this is clearly a desirable tool.  
    
    This theory was developed only in the real case, and even there does not appear in the literature in its simplest form, as far as we have been able to determine. We first revisit this setting,    showing that under a `reasonable condition',  every LCTVS $E$
containing and spanned by a compact convex set $K$ which lies in a hyperplane not passing through the origin, `is' a 
(specific) dual Banach space with the weak* topology.    In this case (that is, if this reasonable condition holds
and $E = {\rm Span}(K)$) then we say that $K$ is embedded {\em regularly} in $E$.  
  The `containing and spanned by' here are mild conditions for some purposes.
Indeed one may often ensure the  spanning by $K$ by a  `cut down': replacing $E$ by Span$(K)$;
and we can ensure the hyperplane condition by translating $K$ and by adding one dimension to $E$  if necessary.   
The aforementioned `reasonable  condition'\footnote{Curiously, this is not Alfsen's definition of regularity, rather his definition 
has the appearance of being far more restrictive.  Alfsen gives no references to the literature here except for the 
phrase `folklore'.}  is that every $f \in A(K)$ has a continuous linear extension to $E$, that is,  $A(K;E) = A(K)$ in Alfsen's notation \cite{Alfsen}.  
Here $A(K)$ is the function system of continuous affine functions on $K$. 
 This is a condition that one would wish to hold anyway in very many applications in convexity theory.  

    Secondly, we find the matching `regularity theory' for convex sets in complex LCTVS's.  Third,  we develop the 
    theory of  regular embeddings of complex nc convex sets (that is,  nc convex sets in the sense of Davidson and Kennedy).
   Matt Kennedy informed us when we recently brought this up in conversation, 
    that the full theory of regular embeddings of nc convex sets (generalizing the classical case in e.g.\
    \cite[Section II.2]{Alfsen}) had not yet been worked out at that time.   (There is a partial form of regularity which may be found used in a couple of places in \cite{DK},  a nc hyperplane condition, which Davidson and Kennedy added 
    to overcome a point raised by Magajna.  This was also used in sequel papers by Kennedy and his collaborators and students.)    
 Then we use this to develop a  theory of  regular embeddings of real nc convex sets.   Interestingly, at this point there 
    seems no direct route to the latter, we had to obtain the real theory of  nc regular embeddings  
    via the complex theory.  We show that any complex nc regular embedding is classically regular.

The `regularity fact' at the start of our paper may be restated as: if an LCTVS $E$
contains and is  spanned by a compact convex set $K$, and if the `reasonable condition' about the existence of a linear extension 
in the first paragraph holds,
then $E \cong A(K)^*$ as LCTVS's (with the embedding of $K$ taken to its canonical embedding in $A(K)^*$. 
We will refer to this fact as   `regularity-by-extension'.  
Thus in the nc case it seems reasonable to assume (as is almost always
done in \cite{DK,BMcI,BMcII} etc), that our LCTVS is a dual operator space $F^*$ with the weak* topology, and that $K$ is  a nc compact convex set in $F^*$.  

(We remark that in a sequel paper \cite{BP2} we show that  interesting results which are slightly weaker
than our main results here, follow assuming only what we call `preregularity' below as opposed to regularity.   See the Acknowledgement at the end of our paper for a few more details.) 

In most of our paper we focus on nc convexity as opposed to 
its relative, matrix convexity.   However we take the time to state
 a simple regularity result for the latter setting. 
 (We also mention that there are some regularity type results in this setting in the sequel paper \cite{BP2}.)  For a LCTVS $E$ and finite $n \in \bN$ we give $M_n(E)$  the product topology.  
 We write $\bA(K)$ for Webster and Winkler's continuous matrix affine maps \cite{WW}.   We  write  $\delta_x$ for the evaluation map $f \mapsto f(x)$ for $x \in K$.  Then 
we have:

\begin{thm} \label{ww} Suppose that $K$ is a compact matrix convex set in the sense of 
{\rm \cite{WW}},  in a complex LCTVS $E$.  The embedding 
$\delta : K_1 \to \bA(K)^*$ extends to a topological isomorphism  $E \cong \bA(K)^*$ which is 
a homeomorphism at all finite matrix levels if and only if $E = {\rm Span}(K_1)$ and every $f \in \bA(K)$ has a continuous linear extension to $E$.
In the last condition we may also replace $\bA(K)$ with $A(K_1)$; hence the canonical map 
$\delta : K_1 \to \bA(K)^*$ extends to a topological isomorphism  $E \cong \bA(K)^*$ if and only if 
$K_1$ is regularly embedded in $E$.
\end{thm}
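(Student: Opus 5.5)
The plan is to reduce everything to the classical (complex) `regularity-by-extension' fact from the introduction: $E\cong A(K_1)^*$ whenever $E={\rm Span}(K_1)$ and every $f\in A(K_1)$ extends continuously and linearly to $E$. To do this I first identify $\bA(K)$ with $A(K_1)$ at the vector space level, and then check that this identification behaves correctly at the matrix levels.

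\textbf{Step 1: restriction to the first level.} I show that restriction $\theta=(\theta_n)\mapsto\theta_1$ is a linear bijection $\bA(K)\to A(K_1)$. In particular $\bA(K)^*$ and $A(K_1)^*$ coincide as vector spaces with the same weak* topology, since they are duals of the same vector space with the same pairing on $K_1$. The key identity, which I expect to be the main technical point, is
\[
\theta_n(x)=\bigl[\tilde f(x_{ij})\bigr]_{i,j},\qquad x\in K_n,
\]
where $f=\theta_1$ and $\tilde f$ is any linear extension of $f$ to ${\rm Span}(K_1)$. (The existence of such an extension can itself be taken from the classical theory.) To prove it I use matrix affinity under compressions: $\alpha^*\theta_n(x)\alpha=\theta_1(\alpha^*x\alpha)$ for unit vectors $\alpha\in\bC^n$, and $\alpha^*x\alpha\in K_1$. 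Taking $\alpha=e_i$ gives the diagonal entries. Taking $\alpha=(e_i+\lambda e_j)/\sqrt2$ with $\lambda\in\{1,-1,i,-i\}$ and polarizing, using linearity of $\tilde f$ on the scalar combinations of the $x_{kl}$, gives the off-diagonal entries. Hence $\theta$ is determined by $\theta_1$, which gives injectivity. Conversely, each $f\in A(K_1)$ defines a matrix affine $\theta$ by this formula, and the formula also shows $\theta$ is continuous at every level. So the restriction map is onto.

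\textbf{Step 2: the easy direction.} Suppose $\delta$ extends to a topological isomorphism $\Phi:E\to\bA(K)^*$. Then $\delta(K_1)$ consists of the states of the function system $A(K_1)$. These span the dual, because every functional is a combination of at most four states. Hence $E={\rm Span}(K_1)$. Moreover, for $f\in\bA(K)$ the map $x\mapsto\Phi(x)(f)$ is a continuous linear extension of $f_1$ to $E$.

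\textbf{Step 3: the converse.} By Step 1, the hypothesis on $\bA(K)$ is equivalent to the hypothesis on $A(K_1)$. So the classical result yields a topological isomorphism $\Phi:E\to A(K_1)^*=\bA(K)^*$ extending $\delta$. The amplification $\Phi_n$ acts entrywise, so it is a homeomorphism $M_n(E)\to M_n(\bA(K)^*)$ for the product topologies. It remains to see that $\Phi_n$ carries $K_n$ onto the canonical $n$-th level of $\bA(K)$, namely the matrix states $\theta\mapsto\theta_n(x)$. By the identity in Step 1, $\Phi_n(x)(\theta)=[\tilde f(x_{ij})]=\theta_n(x)$, so $\Phi_n(x)$ is exactly this matrix state. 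Surjectivity onto the $n$-th level then follows from Webster--Winkler duality: every continuous matrix state of $\bA(K)$ is evaluation at a point of $K_n$.

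\textbf{The final sentence.} The equivalences established in Steps 2 and 3 are precisely the assertion that $K_1$ is regularly embedded in $E$. The main obstacle is the polarization identity in Step 1, including the care needed to ensure that $\tilde f$ is well defined on the span of the entries of $x$.
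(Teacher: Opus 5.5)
Your overall route is the paper's. You identify $\bA(K)$ with $A(K_1)$ by restriction, apply classical regularity-by-extension (Theorem \ref{chco2}) to get $E\cong A(K_1)^*=\bA(K)^*$, and then use the product topologies on $M_n(E)$ and $M_n(\bA(K)^*)$. Two things differ. The paper simply quotes \cite[Theorem 2.5.8]{DK} for the identification, whereas you try to prove it by polarization. You also check that $\Phi_n(x)=\delta_x$ on $K_n$, with the image being all matrix states by Webster--Winkler duality; the paper leaves this implicit, and it is worth keeping.

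The step that does not hold as written is the surjectivity half of Step 1.

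\textbf{Why it fails in general.} The formula $\theta_n(x)=[\tilde f(x_{ij})]$ needs a complex-linear extension $\tilde f$ of an \emph{arbitrary} $f\in A(K_1)$ to ${\rm Span}(K_1)$, and such an extension need not exist. Two examples: $0\in K_1$ and $f\equiv 1$; or $K_1=[1,i]\subset\bC$ and $f(t+(1-t)i)=t$. In both cases restriction $\bA(K)\to A(K_1)$ is nevertheless bijective. The classical lemma produces $\tilde f$ only when $K_1$ is preregular in ${\rm Span}(K_1)$. That means $K_1$ lies in a hyperplane missing $0$, and in a real subspace $W$ with $W\oplus iW={\rm Span}(K_1)$.

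\textbf{Why Step 3 is circular.} There you cannot take $\tilde f$ from the hypothesis on $\bA(K)$, because that hypothesis only covers functions already known to be restrictions.

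\textbf{How to repair it.}
\begin{itemize}
\item In Step 3, first extract preregularity from the hypothesis. The extension of the unit of $\bA(K)$ gives the hyperplane. For $\psi\in E^*$ let $\theta_n=\psi^{(n)}|_{K_n}$, and apply the hypothesis to $\frac12(\theta+\theta^*)$ and $\frac1{2i}(\theta-\theta^*)$. This gives $E^*=G\oplus iG$, with $G$ the continuous functionals that are real on $K_1$. Hence ${\rm Span}_{\bR}(K_1)\cap i\,{\rm Span}_{\bR}(K_1)=(0)$, which supplies $W$.
\item In Step 2, the same two conditions come from $x\mapsto\Phi(x)(1)$ and from the self-adjointness of the $\delta_x$. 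Alternatively, you can bypass Step 1 there by using Webster--Winkler duality at level 1.
\item Or simply cite the bijectivity, as the paper does.
\end{itemize}

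Injectivity needs no $\tilde f$, since a complex matrix $T$ is determined by its values $\alpha^*T\alpha$. Finally, to identify $\bA(K)^*$ with $A(K_1)^*$ you also need the two norms to be equivalent. This follows from the numerical radius bound $\|\theta_n(x)\|\le 2\sup_{\|\alpha\|=1}|\theta_1(\alpha^*x\alpha)|$.
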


\begin{proof}  Note  that  $A(K_1)$ and $\bA(K)$ are bicontinuously isomorphic
and unitally order 
isomorphic, via the canonical restriction map,  by  \cite[Theorem 2.5.8]{DK}.  
So $A(K_1)^*$ and $\bA(K)^*$ are bicontinuously isomorphic.
 The one direction then follows since $A(K_1)^*$ is spanned by $K_1$, and every element of a 
Banach space $X$ has a continuous linear extension to $X^*$.  
For the other direction, since $K_1$ is regularly embedded in $E$ it follows by  `regularity-by-extension' that 
$\delta : K_1 \to A(K_1)^*$ extends to a topological isomorphism  $E \cong A(K_1)^* \cong \bA(K)^*$.
Since $M_n(E)$ has the product topology, as does $M_n(\bA(K)^*)$ (see e.g.\ \cite[Section 1.4]{BLM}), these spaces are linearly topologically isomorphic.
\end{proof} 

{\bf Remark.} 
 It is likely that there is a regularity theorem generalizing the `regularity-by-extension' fact above to the context of Webster's local operator spaces
 \cite{EWe,Wth},
 which are a `matricial generalization' of LCTVS's.   Namely, it should be the case that 
  if a local operator space $E$ contains and is  nc-spanned by a nc compact convex set $K$, and if a `reasonable condition' holds
  matching `regularity-by-extension',
then $E \cong \bA(K)^*$ `matrix-homeomorphically' (at all, including infinite, levels). However since there seems little interest at the present time in 
 such generalizations of LCTVS's (beyond the several papers of Dosi, etc) we will not pursue this here.   
 
 \medskip
 
 We state some notation and background facts.  We write $\bF$ for either $\bR$ or $\bC$, and $H$ for a Hilbert space. 
 We assume that the reader is familiar with basic convexity theory.  We denote the convex hull of a set $A$ by ${\rm co} (A)$ and the closed convex hull by $\overline{{\rm co}} (A)$.  
 We will be considering convex sets $K$ in real and complex topological vector spaces, which may also be normed spaces, operator spaces, or dual operator spaces. 
An ordered vector space is a vector space with a proper positive cone. For an ordered vector space $E$, the cone of positive elements will be denoted by $E_+$. 
   By a {\em $*$-vector space} we mean a vector space with an involution 
(a period 2 automorphism).  If $\bF = \bC$ we assume that the involution is conjugate linear.   If $A$ is a set we denote the set of selfadjoint (resp.\ positive) elements in $A$ by $A_{\rm sa}$ (resp.\ $A_+$).  We assume that the positive cone of an ordered $*$-vector space is contained in the selfadjoint part of the space.

An {\em order unit} for an ordered vector space is an element $e$ such that for all selfadjoint $x$, there exist real $r > 0$ such that $re \ge x$. The order unit is {\em archimedean} if $re + x \ge 0$ for all $r >0$ implies $x$ is positive. A real {\em archimedean order unit space} (aou space) is a real ordered vector space with an archimedean order unit.  It admits a norm defined by 
$\| a \| = \inf \{ t > 0 : -t e \leq a \leq t e \}$, the {\em order unit norm}.   Kadison's theorem(s) say that the real aou  spaces `are exactly' 
the  function spaces/systems, and those which are complete in the order unit norm `are exactly' the function systems $A(K)$ for compact convex sets $K$ (see e.g.\ \cite{Alfsen,AS1}, Section 4.3 of  \cite{KRI}). 
We discuss {\em complex archimedean order spaces} later.  

An \emph{operator space} is a subspace of $B(H)$, the bounded operators on a Hilbert space $H$, or abstractly it is a 
 vector space $E$ with a norm $\| \cdot \|_n$ on the space of matrices $M_n(E)$ for each $n \in \bN$ satisfying the axioms of Ruan's characterization (see e.g. \cite{ER}).  If $X \subseteq B(H)$ is an operator space, then the `matrix norms' above are given by identifying 
 $M_n(B(H))$ with the bounded operators on $n$-fold direct sum of copies of $H$. If $T : X \to Y$ we write $T^{(n)}$ for the canonical `entrywise' amplification taking $M_n(X)$ to $M_n(Y)$, i.e. $T^{(n)}([x_{ij}]) = [T(x_{ij})]$.  
The completely bounded norm is $\| T \|_{\rm cb} = \sup_n \, \| T^{(n)} \|$, and $T$ is completely  contractive if  $\| T \|_{\rm cb}  \leq 1$. 
For a real or complex  operator space $E$ and a possibly infinite cardinal, $M_n(E)$ is the space of matrices whose `finitely supported' submatrices 
 have uniformly bounded norm.     In the case $E = \bF$ write $M_n = M_n(\bF)$. 
Thus  $M_n$ is $B(\ell^2_n)$.  Indeed for every Hilbert space $H$, $B(H) \cong M_n$ $*$-isomorphically for some $n$, 
 after one chooses an orthonormal basis.

An {\em operator system} is a unital selfadjoint subspace of $B(H)$. We denote the identity operator in $M_n(B(H))$ by $I$ or $I_n$.  There is also an abstract characterization due to Choi and Effros in the complex case \cite{CE}. 
A map $T$ is said to be {\em positive} if it takes positive elements to positive elements, and  {\em completely positive} if $T^{(n)}$ is  positive for all $n \in \bN$. A {\em ucp map} is  unital, linear, and completely positive. A \emph{state} on an operator system or aou space $V$ is a (selfadjoint) positive unital functional into $\bF$,  or equivalently a contractive unital functional, and  $S(V)$ or $S_{\bF}(V)$  is the state space, the (compact convex) set of states.   Of course $T$ is {\em selfadjoint} if $T(x^*) = T(x)^*$ for $x \in X$.  This is automatic for completely positive maps between real or complex operator systems.  The isomorphisms (resp.\ embeddings) of operator systems which are used in this paper are bijective (resp.\ injective) ucp maps whose inverse (resp.\ inverse in its range) is ucp.  These are called unital complete order isomorphisms (resp.\ unital complete order embeddings); or unital {\rm coi} for short. 

For general background on complex operator systems and spaces, we refer the reader to e.g. \cite{Pnbook,P,BLM,DK, Dav} and in the real case to e.g.\ \cite{BReal, BR,BMcI}. 
The theory of complex   $C^*$- and von Neumann algebra theory may be found in e.g.\ \cite{Ped}, and 
 basic real $C^*$- and von Neumann algebra theory in \cite{Li}.  
The connection between complex operator systems and compact nc convex sets may be found in \cite{WW,DK,Dav}. 
The real case  may be found in  \cite{BReal, BR,BMcI,BH}.   We write  ncS$(V)$ for the {\em noncommutative state space} of $V$, namely the nc compact convex set $({\rm UCP}(V,M_n))$.

A  \emph{hyperplane} 
not passing through 0 in a vector space $E$ will be a set of the form $\{x \in E: f(x) = 1\}$ for a linear functional $f$ on $E$. A {\em nc hyperplane} in $E$ not passing through 0, is the sequence $(H_n)$ of sets $H_n = \{x \in M_n(E): f^{(n)}(x) = I_n\}$, where $f$ is a fixed linear functional on $E$, and $I_n$ is the $n \times n$ identity matrix.

 See e.g.\ \cite[Sections 1.4 and 1.6]{BLM} for the duality of operator spaces, which is crucial to nc convexity.   If $E = F^*$ is a dual operator space then the weak* continuous completely bounded  linear maps 
 $\Psi : E \to M_n$ correspond to the elements $[\psi_{ij}]$ of $M_n(F)$, via the complete isometry 
 $\Psi(x) =  [\langle x , \psi_{ij} \rangle]$. 
 That is, it is well known in operator space theory that w*CB$(E,M_n) \cong M_n(E_*)$.  This follows for example from the 
 relation  $CB((M_n)_*,F) = M_n(F)$ and $CB(X,F) \cong w^*{\rm CB}(F,X^*)$ via the adjoint.  See e.g.\ (3.2) in \cite{ER}.   
 
 We write $A(K)$  or $A(K,\bF)$ or $A_{\bF}(K)$ for the affine continuous scalar functions on a compact convex set $K$.  These are unital selfadjoint subspaces of $C(K, \bF)$, the continuous functions on $K$ with values in the field $\bF = \bR$ or $\bC$. 
 Alfsen writes  $A(K;E)$ for the set of restrictions to $K$ of continuous linear functionals on $E$, and he shows that $A(K;E)$ is always dense in $A(K)$.

For a (real or complex)  operator space $E$ we  define $\cM(E) = \bigsqcup_n \, M_n(E)$ (for $n$ cardinals bounded by some cardinal $\kappa$) , with $M_n(E)$ the matrix space of $E$.  Here $\bigsqcup_n$ is the disjoint union.
        For $X \subseteq \cM(E)$ define $X_n = X \bigcap M_n(E)$. In the case $E = \bR$ write $\cM = \cM(\bR)$.
          Then $X = \bigsqcup X_n = (X_n)$ is a {\em nc set}.
  We call $X_n$ the $n$th level of $X$.  
An embedding  $K \subseteq L$ is {\em  graded} if  $K_n \subseteq L_n$ for all $n$.
        Let  $K$ be a nc set  in a dual operator space $E$.
                We say that $K$ is  {\em nc convex} (over $E$)
                if $K$ is 
   \begin{enumerate}
            \item  graded: in particular $K_n \subseteq M_n(E)$ for all $n$, 
            \item closed under direct sums: $\sum \alpha_i x_i \alpha_i^* \in K_n$ for all bounded families $\{x_i \in K_{n_i}\}$ and every family of isometries $\{\alpha_i \in M_{n,n_i}\}$ where $\sum \alpha_i \alpha_i^* = 1_n$, 
            \item closed under compressions: $\beta^* x \beta \in K_m$ for every $x \in K_n$ and isometry $\beta \in M_{n,m}$.
        \end{enumerate} 
        As in \cite{DK} we say that $K$ is {\em closed/compact} if  $K_n$ is closed/compact in the 
        weak* topology in $M_n(E)$
         (which we recall is a dual space). 
         For $\{x_i \in M_{n_i}(E)\}$ bounded and $\{\alpha_i \in M_{n_i,n} \}$ such that $\sum \alpha_i^* \alpha_i = 1_n$, a {\em nc convex combination} of $x_i$ is defined as $\sum \alpha_i^* x_i \alpha_i \in M_n(E)$. As in  Proposition 2.2.8 in \cite{DK}
        or Remark 16.4.3 (3) in \cite{Dav},  a subset $K \subseteq \cM(E)$ is nc convex if and only if it is closed under nc convex combinations.
        
           The natural morphisms between nc convex sets are {\em nc affine functions}. These will be maps $\theta: K \to L$ between real nc convex sets which are graded, respect direct sums, and equivariant with respect to isometries. That is, for all $n$
        \begin{enumerate}
            \item $\theta(K_n) \subseteq L_n$,
            \item $\theta(\sum \alpha_i x_i \alpha_i^*) = \sum \alpha_i \theta(x_i) \alpha_i^*$ for all bounded families $\{x_i \in K_{n_i}\}$ and every family of isometries $\{\alpha_i \in M_{n,n_i}\}$ where $\sum \alpha_i \alpha_i^* = 1_n$,
            \item $\theta(\beta^* x \beta) = \beta^* \theta(x) \beta$ for every $x \in K_n$ and isometry $\beta \in M_{n,m}$.
        \end{enumerate}
    We say that     $\theta$ is continuous if $\theta|_{K_n}: K_n \to M_n(\bR)$ is continuous for every $n$. 
    Then $\bA(K)$  or $\bA_{\bF}(K)$ is the space of all continuous affine nc `scalar functions' from $K$ into $\cM(\bF)$ 
 (see \cite{DK,Dav,BMcI}). 
 We  write  ${\rm UCP}^\sigma(V, M_n)$ for the collection of weak* continuous ucp maps into $M_n$, that is, the normal matrix state space of $V$.
We  write  $$c(x,y) = \begin{bmatrix}
            x & -y\\ y & x
        \end{bmatrix} .$$ 
  We also sometimes write $c(x+iy)$ for $c(x,y)$.     
  We  write  $\delta_x$ for the evaluation map $f \mapsto f(x)$ for $x \in K$.  This is nc affine. 
             

  The complex $*$-vector space version of Kadison's 
        characterization of Archimedean order unit spaces as function systems follows immediately from the real case 
        mentioned above \cite{PT}.  See also 
e.g.\ Section 4.3 of  \cite{KRI} and \cite{WW}.  
We include a proof since it is so short:

        \begin{lem} \label{coK}   Archimedean order unit complex  $*$-vector spaces are exactly 
the complex function systems.   That is, 
let $V$ be a  complex $*$-vector space with cone $V_+ \subset V_{\rm sa}$ such that $(V_{\rm sa}, V_{+},e)$ is a real Archimedean order unit space.  Then $V$ is (complex $*$-linear unital order  embedded as) a selfadjoint unital subspace of 
       $C(K,\bC)$ for some compact Hausdorff set $K$.   \end{lem}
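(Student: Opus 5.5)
The plan is to reduce to the real Kadison theorem quoted above. Let $K = S_{\bR}(V_{\rm sa})$ be the real state space of the real aou space $(V_{\rm sa}, V_+, e)$. By Kadison's theorem the map $\theta_0 : V_{\rm sa} \to A(K,\bR) \subseteq C(K,\bR)$, $\theta_0(a)(\varphi) = \varphi(a)$, is a unital real linear order embedding; in fact it is an order isomorphism onto its range, since the states determine the positive cone via the archimedean property.

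Every $v \in V$ decomposes uniquely as $v = a + ib$ with $a = (v+v^*)/2$ and $b = (v - v^*)/(2i)$ in $V_{\rm sa}$, because the involution is conjugate linear. Using this, define $\theta : V \to C(K,\bC)$ by
\[
\theta(a+ib) = \theta_0(a) + i\,\theta_0(b),
\]
equivalently $\theta(v)(\varphi) = \tilde\varphi(v)$, where $\tilde\varphi$ is the complexification of $\varphi$. I will then check the following.

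\begin{enumerate}
\item $\theta$ is complex linear. Additivity and real homogeneity are clear. For $i$-homogeneity, note $i(a+ib) = -b + ia$, so $\theta(iv) = -\theta_0(b) + i\theta_0(a) = i\theta(v)$.
\item $\theta$ is $*$-preserving: $\theta(v^*) = \theta(a - ib) = \overline{\theta(v)}$.
\item $\theta$ is unital, since $\theta(e) = \theta_0(e) = 1$.
\item $\theta$ is injective, since $\theta(v) = 0$ forces $\theta_0(a) = \theta_0(b) = 0$, hence $a = b = 0$.
\end{enumerate}

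For the order part, $v \in V_+$ means $v \in V_{\rm sa}$ (so $b = 0$) and $\theta_0(v) \ge 0$, and conversely. Hence $\theta(V_+) = \theta(V)_+$, where the latter cone consists of the functions that are selfadjoint and pointwise nonnegative. It follows that $\theta$ is a complex $*$-linear unital order embedding onto a selfadjoint unital subspace of $C(K,\bC)$.

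Conversely, any selfadjoint unital subspace of $C(K,\bC)$, with the pointwise cone and the constant function $1$, is clearly an archimedean order unit complex $*$-vector space. So the two classes coincide.

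The only substantive point is the order-embedding property of $\theta_0$, namely that a selfadjoint $a$ with $\varphi(a) \ge 0$ for all states $\varphi$ is positive. This is exactly the content of Kadison's real theorem, which I invoke rather than reprove; its proof uses a Hahn--Banach separation together with the archimedean property. Everything else is routine bookkeeping with the Cartesian decomposition.
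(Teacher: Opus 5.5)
Your proposal is correct and follows the paper's proof essentially verbatim. Both arguments take $K = S_{\bR}(V_{\rm sa})$, apply the real Kadison representation $\theta_0$, and define $\theta(a+ib) = \theta_0(a) + i\,\theta_0(b)$; the order-embedding property comes from the fact that $\theta(a+ib) \geq 0$ forces $\theta_0(b)=0$, hence $b=0$, and $a \in V_+$. Your extra checks ($i$-homogeneity, the $*$-property, and the easy converse) are just the routine details the paper calls obvious.
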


\begin{proof}    Let $K = S_{\bR}(V_{\rm sa})$, the real state space. 
            By Kadison's function representation
            there exists a real unital order embedding $\theta : V_{\rm sa} \to C(K,\bR)$.  It is obvious 
       that $\rho(x+iy) = \theta(x) + i \theta(y)$ defines a complex linear unital order  embedding $V \to C(K,\bC)$. 
       E.g.\ $\rho(x+iy) \geq 0$ if and only if $\theta(y) = y = 0$ and $\theta(x)$ and so $x$ are positive, that is, 
        if and only if $x+iy = x \in V_+$.   \end{proof}

        Going further in the last result and its proof, a moments further thought reveals that the restriction map $S_{\bC}(V) \to K = S_{\bR}(V_{\rm sa})$ is  surjective, one-to-one, and affine.  Indeed complex states on the complex function system 
        $V = V_{\rm sa} + i V_{\rm sa}$ are selfadjoint, and they are 
        exactly the complexifications of the real states on $V_{\rm sa}$.  So $S_{\bC}(V)$ is topologically affine isomorphic to $S_{\bR}(V_{\rm sa})$.
        In particular it is easy to see from this that the seminorm 
        $\sup_{\varphi \in S(V)} \, | \varphi(\cdot)|$ on $V$ induced by its state space, is a norm with respect to which the map  $\rho : V \to C(K,\bC)$  above is an isometry.   Also note that $\rho$ (resp.\ $\theta$) maps into (resp.\ onto) the function system $A_{\bC}(K)$ (resp.\ $A_{\bR}(K)$   
        if $V_{\rm sa}$ is complete in the order unit norm, by the classical real Kadison function representation).
        Thus $\rho$ is surjective, hence gives an isometric identification $V \cong  A_{\bC}(K)$.   See also Section 4.3 of  \cite{KRI}. 
        
        Putting the above together gives the `Kadison theorem(s)'  for complex aou spaces. See also results 1.1 and 1.2 in \cite{BH}.
        In particular we have a unital isometric order isomorphism $V \cong A(S(V))$.   There is a matching 
        duality relation, the topological affine isomorphism  $K \cong S(A(K))$ for 
        any compact convex set $K$ in 
        an LCTVS.  This is easier to prove since the 
        canonical map $\delta : K \to S(A(K)) \subset A(K)^*$ is clearly continuous, affine, and one-to-one since $E_*$ separates points of $E$.   So its range is compact.   It is surjective by a standard geometric Hahn-Banach argument.

Base norm spaces are the dual  theory to order unit  spaces.  The theory of classical and noncommutative real and complex base norm spaces is presented in the companion paper \cite{BH}.  We will use some results from the present paper there.  There are strong links  between the present paper and  \cite{BH}:  For example  if $E$ is a complex nc dual base norm space with base $K$ in the sense of  \cite{BH}, then $K$ is regularly embedded in $E$.   This follows from the proof of Theorem 4.12  in 
 \cite{BH} and our Corollary \ref{chco24} (or \ref{chco3}).  (See \cite[Corollary 4.13]{BH}.) 
 Conversely, if (classical or nc) $K$ is regularly embedded in $E$ then  $E$ `is' a  (classical or nc) base normed space with (classical or nc) base $K$. 
 
 Often in our paper for a convex or nc convex subset $K$ of a complex space $E$ we assume that there is a real subspace $W$ with $W \oplus iW = E$ and $K \subset W$.
 In many applications $E$ is a $*$-vector space and $W$ is simply the selfadjoint part $E_{\rm sa}$. 
 
 For any real unital operator subsystem $V \subseteq B$ 
 a continuous real functional on $V$ may be extended to a continuous functional on $B$. For example if $B = C_{\bR}(K)$ then the latter is a
 difference of two positive functionals, and each of these is a nonnegative multiple of  a state.  In the complex case we get 4 states by the obvious variant of this argument, or one can see this by complexification.  A similar fact holds for general $C^*$-algebras $B$ (see e.g.\ \cite{Ped}).

\section{Regularity} 

Here $K$ is a compact convex set in a LCTVS $E$. We say that the embedding of $K$ in $E$ is {\em preregular} if 
$E = {\rm Span} (K)$ 
and if $K$ lies on a hyperplane not passing through  $0$.   Assuming this we first show that 
$E \cong A(K)^*$ linearly, and that any $f \in A(K)$ is the restriction to $K$ of a linear functional on $E$.  Note that $A_{\bR}(K)$ and $A_{\bC}(K)$ are real 
and complex function systems, and are real and complex operator systems inside $C_{\bR}(K)$ and $C_{\bC}(K)$.

 \subsection{The real classical case} 
 
 \begin{lem} Let $K$ be a compact convex set in a real LCTVS $E$. There exists a surjective linear map $A(K)^* \to E$ taking
 $\delta_{x}$ to $x$ for each $x \in K$.  It is one-to-one if the embedding of $K$ in $E$ is  preregular.
 \end{lem}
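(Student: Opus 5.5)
The plan is to build the map by hand on $A(K)^*$, using the fact that every functional on $A(K)$ is a difference of two nonnegative multiples of states. I will also use the duality $K \cong S(A(K))$ via $\delta$ recalled in the introduction. Since $A(K)^*$ is spanned by states, the natural target is ${\rm Span}(K)$. So I read ``surjective onto $E$'' as surjective onto ${\rm Span}(K)$; this is all of $E$ whenever $E = {\rm Span}(K)$, as in the preregular situation.

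\emph{Step 1 (decomposition).} Take $\varphi \in A(K)^*$. Extend it by Hahn--Banach to a bounded functional on $C_{\bR}(K)$, as remarked at the end of the introduction. It is then a difference of two positive functionals, each a nonnegative multiple of a state. Restricting to $A(K)$ and using $S(A(K)) = \delta(K)$, we get $\varphi = a\delta_x - b\delta_y$ with $a,b \ge 0$ and $x,y \in K$.

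\emph{Step 2 (definition and well-definedness).} Set $T(\varphi) = ax - by$. This is the step that needs care. Suppose $a\delta_x - b\delta_y = c\delta_z - d\delta_w$. Then $a\delta_x + d\delta_w = c\delta_z + b\delta_y$. Evaluating at the constant function $1$ gives $a+d = c+b =: s$. If $s = 0$ all four coefficients vanish and there is nothing to prove. Otherwise divide by $s$ and use that elements of $A(K)$ are affine: this gives $\delta_{(ax+dw)/s} = \delta_{(cz+by)/s}$, both points lying in $K$ by convexity. Now $\delta$ is injective on $K$, because continuous linear functionals on $E$ separate points and restrict to elements of $A(K)$. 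Hence $ax + dw = cz + by$, that is, $ax - by = cz - dw$.

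\emph{Step 3 (linearity and $\delta_x \mapsto x$).} The same trick gives linearity. A sum of two representations, or a scalar multiple of one, can be rewritten in the form $a'\delta_{x'} - b'\delta_{y'}$ by taking convex combinations inside $K$. Well-definedness then shows that $T$ respects these operations. Clearly $T(\delta_x) = x$.

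\emph{Step 4 (surjectivity).} Since $K$ is convex, ${\rm Span}(K) = \{ax - by : a,b \ge 0,\ x,y \in K\}$, and every such element is $T(a\delta_x - b\delta_y)$.

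\emph{Step 5 (injectivity under preregularity).} Let $f$ be a linear functional on $E$ with $f = 1$ on $K$. Suppose $T(\varphi) = ax - by = 0$. Applying $f$ gives $a = b$. If $a = 0$ then $\varphi = 0$. Otherwise $x = y$, so again $\varphi = a(\delta_x - \delta_x) = 0$.

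I expect Step 2 to be the main obstacle; the remaining steps are routine bookkeeping.
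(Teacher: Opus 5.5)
Your proposal is correct and follows essentially the same route as the paper. Like the paper, you write $\varphi = a\delta_x - b\delta_y$ using Hahn--Banach/Jordan and $S(A(K)) = \delta(K)$, prove well-definedness by evaluating at $1$ and using that $\delta$ is affine and injective, and use the hyperplane functional for injectivity (your kernel argument in Step 5 is a streamlined version of the paper's reversal of the well-definedness argument). Your caveat that surjectivity needs $E = {\rm Span}(K)$ is also exactly what the paper's proof uses, without saying so, in its final paragraph.
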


\begin{proof}    Suppose that $\varphi = c_1 \, \delta_{x_1} - c_2  \, \delta_{x_2} = d_1 \, \delta_{y_1} - d_2  \, \delta_{y_2}$ on 
$A(K)$ for $x_i , y_i \in K$ and $c_i  , d_i \geq 0$.  Applying to $1$ we get that $c_1 \, - c_2 = d_1 - d_2$. 
So $c_1  \, \delta_{x_1} +  d_2  \, \delta_{y_2} = c_2  \, \delta_{x_2} + d_1 \, \delta_{y_1}$.  Dividing by 
$c_1 + d_2 = c_2+d_1$, and using that $\delta : K \to A(K)^*$ is affine and one-to-one 
(since $A(K)$ separates points), 
we obtain $c_1 \, x_1 - c_2  \, x_2 = d_1 y_1 - d_2  \, y_2$ in $E$.   This defines a linear map $A(K)^* \to E$ taking
 $\delta_{x}$ to $x$ for each $x \in K$.  Indeed as we said above any element in $A(K)^*$ is a  difference of two 
  nonnegative multiples of  a state.  Since $S(A(K)) \cong K$, every state on $A(K)$ is $\delta_{x}$ for some $x \in K$.

The argument is reversible if the embedding of $K$ is preregular.  
Indeed conversely, if  $c_1 \, x_1 - c_2  \, x_2 = d_1 y_1 - d_2  \, y_2$ in $E$ then 
using the hyperplane assumption we see that $c_1 + d_2 = c_2+d_1$, so that 
since $\delta$ is affine we get $c_1  \, \delta_{x_1} +  d_2  \, \delta_{y_2} = c_2  \, \delta_{x_2} + d_1 \, \delta_{y_1}$ and 
$ c_1 \, \delta_{x_1} - c_2  \, \delta_{x_2} = d_1 \, \delta_{y_1} - d_2  \, \delta_{y_2}$ on 
$A(K)$.   

Since $E = {\rm Span} (K)$, grouping the terms with positive coefficients and the terms with negative coefficients, and using that $K$ is convex, 
any $x \in E$ may be written as 
$c_1 \, x_1 - c_2  \, x_2$  for $x_i \in K$ and $c_i \geq 0$. So the map is surjective. 
\end{proof}

Thus if the embedding of $K$ in $E$ is  preregular  we obtain a well defined linear isomorphism $q : E \to A(K)^*$ taking $c_1 \, x_1 - c_2  \, x_2$ for  $x_i \in K$ and $c_i \geq 0$, 
to $c_1 \, \delta_{x_1} - c_2  \, \delta_{x_2}$.  It is surjective by the argument using Kadison's theorem.   So $E \cong A(K)^*$ linearly. 
 Note that $q_y(f) =  f(y)$ if $y \in K, f \in 
 A(K)$, so that $x \mapsto q_x(f)$ defines a linear functional $\tilde{f}$  on $E$ extending $f$.  
 
 We say that the (preregular) embedding of $K$ is {\em regular} if $q$ is continuous with respect to the weak* topology of $A(K)^*$. 
 In this case, $\tilde{f}$  is a continuous linear extension of $f$ to $E$.   Moreover it is the unique linear extension of $f$ to $E$ since $E  = {\rm Span} ( K)$. Indeed by the proof in the following Remark,  $\tilde{f}$ is continuous on $E$ for all $f \in A(K)$ if and only if $q$ is continuous with 
 respect to the weak* topology of $A(K)^*$.  

\bigskip 

{\bf Remark.} It is easy to see that if the embedding of $K$ is preregular, then it is regular (i.e.\ it  is preregular and $q$ is continuous with respect to the weak* topology of $A(K)^*$) if and only if  
every $f \in A(K)$ has a continuous linear extension to $E$.   Indeed assuming the latter, that extension is $\tilde{f}(x) = q_x(f)$ for $x \in E$.  So if $x_t \to x$ weak* in $E$ 
then $q_{x_t}(f) \to q_{x}(f)$ 
for each $f \in A(K)$.  The latter is simply saying that  $q_{x_t} \to q_{x}$ weak*.    

 Thus if the embedding of $K$ is regular then we have $A(K;E) = A(K)$ in Alfsen's notation of his Corollary II.2.3. 
  The definition of regularity in Alfsen assumes $q$ is a homeomorphism with respect to the weak* topology of $A(K)^*$. However this is essentially 
 automatic if $q$ is continuous, as we shall see below. 
 
 \bigskip

 Suppose that $E$ and $F$ are LCTVS's in duality, with each canonically being the dual space of each other, so that the topologies on $E$ and $F$ are the 
 weak topologies.  (This is always possible to arrange, by taking for example $F$ to be the continuous dual of $E$ with the `weak* topology' (see e.g.\ V.I in 
 \cite{Conw}), 
 and giving $E$ the $\sigma(E,F)$ topology.  Then $f_t \to f$ in $F$ 
 if and only if $\langle f_t , e \rangle \to \langle f , e \rangle$ for all $e \in E$, and similarly for nets in $E$ 
 converging $\sigma(E,F)$.) 
 Then $K$ is compact in the weak* topology induced by $E = F^*$, indeed  the map $E \to F^*$ is a homeomorphism. 
 So we have a canonical linear map $\theta : F \to A(K)$ given by restriction of $\varphi \in F$ to $K$. 
 It is one-to-one since $K$ is spanning.    
 
 Note that if $K$ is preregular then   $\theta(F)$ is dense in $A(K)$, since the annihilator of $\theta(F)$ is contained in 
 ${\rm Ker} (q^{-1}) = (0)$.    Indeed if $\varphi \in \theta(F)^\perp$ and $\varphi  = c \, \delta_{x} - d  \, \delta_{y}$ as above,  and if $\psi \in F$, then $\varphi(\psi_{|K}) = 
  c \psi(x) - d \psi(y) = \psi(cx-dy) = 0$.
 So $q^{-1}(\varphi) = cx - dy = 0$.  
 
 Note that $\theta^{-1}$ is clearly continuous with respect to either the norm or weak topology on $A(K)$, and the weak topology on $F$.
For example, $\| (\varphi_t)_{|K}
 - \varphi_{|K} \|_{C(K)} \to 0$ implies 
 $\varphi_t \to \varphi$ weakly (i.e.\ $\sigma(F,E)$) in $F$.  
 So $\theta^{-1}$ is continuous.

 \bigskip
 
{\bf Remarks.}   1)\ That the canonical embedding of 
$S(V)$ in $V^*$ is preregular, for a real unital function space $V$,  is easy.
However that the canonical embedding of 
$S(V)$ in $V^*$  is a regular embedding is not obvious.  Indeed it essentially is precisely the Kadison duality formula $V \cong A(S(V))$, since this is saying that every $f \in A(S(V))$ has a 
weak* continuous linear extension to $V^*$, and such extensions `are precisely' the elements of $V$.
Recall that by Kadison's theorem the canonical map $\iota : V \to A(S(V))$
is an isometry, and this is essentially our map $\theta$ above when $E = V^*$ and $F = V$.

In this case any affine continuous $f$ on $K$ extends norm-preservingly to a functional on $V^*$.

\smallskip

2)\ If the hyperplane in the definition of `preregular' 
 above is closed then it is $\{ x \in E : \varphi(x) = t \}$ for $\varphi \in F$ so that $\theta(\varphi /t) = 
 1$.   Conversely 
if $\theta(\varphi) = 1$ then  $\theta(\varphi)(k) = \varphi(k) = 1$, so $K$ is in the closed hyperplane 
  $\{ x \in E : \varphi(x) = 1 \}$.

 \bigskip
 
 Every (real or complex)  
 normed space $Y$ has a weak topology, which we call below the {\em weak topology associated with the norm}. 
 We will say that a normed space $Y$ is a normed predual of an LCTVS $E$ if there is a topological linear isomorphism
 between $E$ and the Banach space dual of $Y$ with its weak* topology. 
  
  \begin{thm} \label{chco} For a compact convex set $K$ which spans  
  a real LCTVS $E$, the following are equivalent:
    \begin{enumerate}
        \item [{\rm (1)}] The embedding of $K$ in  $E$ is regular. 
  \item [{\rm (2)}]  Every $f \in A(K)$ has a continuous linear extension to $E$. 
  \item [{\rm (3)}]   The map $\delta : K \to A(K)^*$ extends (necessarily uniquely)  to a 
linear  isomorphism and homeomorphism $\rho : E \to A(K)^*$, the latter with the weak* topology.
(If the embedding of $K$  is preregular this is just saying that  
the map $q$ above is a weak* homeomorphism.)  
       \end{enumerate} 
  If these hold then  
  $\theta^* = q^{-1} = \rho^{-1}$, and $E$ has a normed predual which is linearly  isometric to $A(K)$. 
  Indeed the TVS dual $(F,\tau)$ of $E$ has a complete norm with respect to which   $\tau$ agrees with the weak topology associated with the norm.  
  One may choose this norm on $F$ such that $\theta$ above is isometric, and such that
  $q$ is isometric 
  with respect to the dual Banach space norm on $E = F^*$, and is a homeomorphism for the weak* topology
  on the latter dual Banach space.  Indeed this norm on $F$  is exactly 
  the canonical order unit norm, and $F$ is an order unit space (so a function space) 
   with $F_+$ the functionals on $E$ which are positive on $K$, and with `order unit'
  the unique functional that is 1 on $K$.   \end{thm}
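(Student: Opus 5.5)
The plan is to work through preregularity, the map $q$ from the Lemma, and the canonical restriction map $\theta : F \to A(K)$, where $F$ is the continuous dual of $E$ and $E$ carries $\sigma(E,F)$ as arranged above. The cycle is (1)$\Leftrightarrow$(2) first, then (2)$\Rightarrow$(3)$\Rightarrow$(2).

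\emph{(2)$\Rightarrow$ preregular.} Since $K$ spans $E$, the only missing ingredient is the hyperplane. Apply (2) to the constant function $1 \in A(K)$. This gives a continuous $\varphi \in F$ with $\varphi = 1$ on $K$. Hence $K$ lies in the closed hyperplane $\{\varphi = 1\}$, which misses $0$, so the embedding is preregular. Once we are preregular, the Remark above gives exactly (1)$\Leftrightarrow$(2).

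\emph{(2)$\Rightarrow$(3).} Condition (2) says precisely that $\theta$ is surjective, and we already know $\theta$ is one-to-one because $K$ spans. So $\theta : F \to A(K)$ is a linear bijection. Consider its algebraic adjoint $\theta^* : A(K)^* \to F^* = E$, using that $E$ is canonically the dual of $F$. We have $\langle \theta^*(\delta_x), \varphi \rangle = \varphi(x)$, so $\theta^*(\delta_x) = x$ for $x \in K$. By linearity and spanning, $\theta^* = q^{-1}$. As an adjoint, $\theta^*$ is continuous from the weak* topology to $\sigma(E,F)$. Together with the continuity of $q$ from (1), this shows that $\rho := q$ is a linear weak* homeomorphism extending $\delta$. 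Uniqueness of the extension is immediate from $E = {\rm Span}(K)$.

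\emph{(3)$\Rightarrow$(2).} For $f \in A(K)$, the map $\hat f : \psi \mapsto \psi(f)$ is weak* continuous on $A(K)^*$. Then $\hat f \circ \rho$ is a continuous linear functional on $E$, and it agrees with $f$ on $K$ because $\rho(x) = \delta_x$.

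\emph{The supplementary assertions.} Give $F$ the norm $\|\varphi\| := \|\theta(\varphi)\|_{A(K)}$. It is complete because $\theta$ is onto the Banach space $A(K)$, and $\theta$ is then an isometry by construction. Hence $F^* \cong A(K)^*$ isometrically via $\theta^{*-1} = q$. The weak topology of this norm on $F$ is $\sigma(F, F^*) = \sigma(F, E)$, which equals $\tau$. Finally, pull back the order structure of $A(K)$ through $\theta$. The positive cone becomes the set of $\varphi$ with $\varphi \ge 0$ on $K$, and the order unit is $\theta^{-1}(1)$, the unique functional equal to $1$ on $K$ (unique since $K$ spans). The norm transported from $A(K)$ is then the order unit norm.

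I expect the main subtlety to be the step (2)$\Rightarrow$(3), specifically the continuity of $q^{-1}$. This is resolved by identifying $q^{-1}$ with the adjoint $\theta^*$, which needs the surjectivity of $\theta$, and that surjectivity is exactly what (2) supplies.
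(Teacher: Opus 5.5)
Your proof is correct, but at the crucial step it takes a different and shorter route than the paper.

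For (1)$\Rightarrow$(3), the paper proves continuity of $q^{-1}$ by hand. It writes a bounded weak* convergent net in $A(K)^*$ as $c_t\delta_{x_t}-d_t\delta_{y_t}$ and uses compactness of $K$ with a subnet argument to get $q^{-1}(\varphi_t)\to q^{-1}(\varphi)$. It then applies Krein--Smulian to remove the boundedness assumption. For (3)$\Rightarrow$(1), the paper shows that $\theta=(\rho^{-1})^*$ is a surjective weak homeomorphism, obtains the hyperplane from $1\in\theta(F)$, and deduces $q=(\theta^{-1})^*$.

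You instead note that $q^{-1}=\theta^*$ is an adjoint, so $\omega\mapsto\omega(\theta(\varphi))$ is weak* continuous for each $\varphi\in F$. You also get (3)$\Rightarrow$(2) in one line by composing $\rho$ with evaluation at $f$. This avoids Krein--Smulian and the compactness argument entirely.

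Contrary to your closing remark, neither the identification $\theta^*=q^{-1}$ nor its continuity needs surjectivity of $\theta$. The facts $\theta^*(\delta_x)=x$ and $A(K)^*={\rm span}\,\delta(K)$ hold for any preregular embedding. The latter is also what guarantees that $\theta^*$ lands in $E$ rather than in the algebraic dual of $F$. So your argument actually shows that $q^{-1}$ is always continuous, and the only content of regularity is continuity of $q$.

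What the paper's bounded-net argument buys in exchange is that it uses only compactness of $K$ in the given topology of $E$. It therefore gives continuity of $q^{-1}$ on bounded sets even for a topology on $E$ finer than $\sigma(E,F)$. That is the kind of statement pursued in the sequel mentioned in the Acknowledgement.
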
 

 \begin{proof}   
 (1) $\Leftrightarrow$ (2) \ Follows from 
the first Remark in this subsection (the hyperplane condition in the definition of preregularity follows from the previous Remark 2). 

 (1) $\Rightarrow$ (3) \  Suppose that  the embedding of $K$ in  $E$ is regular, and that we have a bounded net $\varphi_t \to \varphi$ weak* in $A(K)^*$, which we may assume is in the ball. 
 Write $\varphi_t = c_t \delta_{x_t} - d_t \delta_{y_t}$, and $\varphi = c \delta_{x} - d \delta_{y}$, for $x_t, y_t, x, y \in K$ and nonnegative scalars
 $c_t, d_t, c, d$ with $c_t + d_t \leq 1$.   (Any contractive functional on $A(K)$ is extendible to a contractive functional on $C(K)$.  By the Hahn-Jordan theorem if necessary, the latter 
  is a difference of two positive contractive functionals on $A(K)$ whose sum is contractive (see also the lines before the start of this subsection), each of which is of form $c \delta_{x}$ by 
  the identity $K \cong S(A(K))$.) To show that $q^{-1}(\varphi_t) = c_t x_t - d_t y_t \to cx -dy = q^{-1}(\varphi)$, 
 it suffices to show that every subnet of this has a subnet that converges to $cx -dy$.  For simplicity we continue to write $\varphi_t$ for the subnet.  
 Taking further subnets and using compactness we may assume that $x_{t_\mu} \to x', y_{t_\mu}  \to y', c_{t_\mu}  \to c', d_{t_\mu}  \to d'$.  Thus $c_{t_\mu}  x_{t_\mu} - d_{t_\mu}  y_{t_\mu}  \to c'  x'- d' y'$.
 Since $q$ is continuous we have
 $c \delta_{x} - d \delta_{y} = c' \delta_{x'} - d' \delta_{y'} = \varphi$.  Thus $c' x' -d' y' =cx -dy$, as desired.  That is,    $q^{-1}(\varphi_t) \to q^{-1}(\varphi)$.
 
 For any $\psi \in F$, define a functional $G(\varphi) = \langle \psi , q^{-1} (\varphi) \rangle$ on $A(K)^*$.   For a bounded net $\varphi_t \to \varphi$ weak* in $A(K)^*$ 
 we have $G(\varphi_t ) \to G(\varphi)$.   Thus by the Krein-Smulian theorem, $G$ is weak* continuous.   Thus $q^{-1}(\varphi_t ) \to q^{-1}(\varphi)$ weakly (that is, $\sigma(E,F)$), so that 
 $q^{-1}(\varphi_t ) \to q^{-1}(\varphi)$ in $E$.  Hence $q^{-1}$ is continuous.  Thus $q$ is a homeomorphism.

 (3) $\Rightarrow$ (1) \    If $\rho$ is a homeomorphism 
then $(\rho^{-1})^*$ is onto and a homeomorphism for the weak topologies.  
However $$((\rho^{-1})^*(\psi))(k) = \psi(\rho^{-1}(\delta_k))  = \psi(k) = \theta(\psi)(k), \qquad k \in K , \psi \in F,$$ so that $\theta = (\rho^{-1})^*$.  Thus  $\theta$ is a surjective homeomorphism between the TVS dual
$F$ of $E$, and $A(K)$ (both with the weak topology).
Again, the hyperplane condition follows 
 if $1 \in \theta(F)$ by the above Remark 2).  So the embedding of $K$ in $E$ is preregular. 
 If $\theta$ is  a homeomorphism then 
 $$(\theta^{-1})^*(k)(f) = \langle k, \theta^{-1}(f) \rangle = f(k) = \delta_k(f), \qquad k \in K, f \in F,$$ so that $(\theta^{-1})^* = q$. 
 Thus $q$ is  a homeomorphism.  
 So $K$ is regularly embedded since  $q$ is continuous.

 The assertions about the 
 norm on $F$ clearly follow from  the above (one just transfers the norm from $A(K)$ via $\theta$). Let $G$ be the 
  dual Banach space of $F$, and $\tau$ its weak* topology.  The map $E \to (G,\tau)$  is a homeomorphism by 
  definition of these topologies, and
  $q = \theta^*$ is isometric.   We leave the final assertions as an exercise (indeed as we said in the introduction, $A(K)$ is the generic (complete) archimedean order unit space). 
 \end{proof}  
 
 {\bf Remarks.} 1)\ One may add as a fourth equivalence in the theorem that
  {\rm (4)} \  The map $\theta$ above is  a surjective homeomorphism between the TVS dual $F$ of $E$, and $A(K)$  (both with the weak topology).   Indeed in  the proof of the theorem we showed that  (3) implies (4). 
  Conversely, if $\theta$ is  a homeomorphism then 
 $$(\theta^{-1})^*(k)(f) = \langle k, \theta^{-1}(f) \rangle = f(k) = \delta_k(f), \qquad k \in K, f \in F,$$ so that $(\theta^{-1})^* = q$. 
 Thus (3) holds.
 
 \smallskip
 
 2)\ Since $F$ is an order unit space in the last result (and the next), its dual $E$ may be viewed as a base norm space \cite{BH}.  A similar assertion holds  in the matching theorems later in the complex case or nc case.
 
\medskip

 The following s an isometric variant.  It is rather superficial but we include it for completeness. 
 
\begin{cor} \label{chco23} For a real dual Banach space $E$ and a compact convex set $K \subseteq {\rm Ball}(E)$ which spans  
  $E$, the following are equivalent:
    \begin{enumerate}
        \item [{\rm (1)}] The embedding of $K$ in  $E$ is regular 
         and the map $q$ above is a contraction. 
  \item [{\rm (2)}]  Every $f \in A(K)$ has a continuous norm preserving linear extension to $E$. 
  \item [{\rm (3)}]   The map $\delta : K \to A(K)^*$ extends (necessarily uniquely)  to a 
weak* homeomorphic linear isometry  $\rho : E \to A(K)^*$ 
 (both spaces with the weak* topology).
   \item [{\rm (4)}]   The map $\theta$ above is  an isometric isomorphism between $F = E_*$ and $A(K)$.   
    \end{enumerate} 
  In this case   $\theta^* = q^{-1} = \rho^{-1}$.   \end{cor}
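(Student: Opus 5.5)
Throughout, $E=F^*$ with $F=E_*$, $E$ carries its weak* topology, and $\theta:F\to A(K)$ is restriction to $K$. Since $K\subseteq {\rm Ball}(E)$, for $\psi\in F$ we have $\|\theta(\psi)\|_\infty=\sup_{k\in K}|\psi(k)|\le\|\psi\|$, so $\theta$ is always a contraction. Moreover, since ${\rm Ball}(A(K)^*)$ consists of the functionals $c\delta_x-d\delta_y$ with $c+d\le 1$, the contractivity of $q$ amounts to the inequality $\|cx-dy\|\le 1$ whenever $x,y\in K$, $c,d\ge0$, $c+d\le1$, $cx-dy$ being the preimage of $c\delta_x-d\delta_y$. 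Given these two observations, I would run the cycle (1)$\Rightarrow$(4)$\Rightarrow$(3)$\Rightarrow$(2)$\Rightarrow$(1), leaning on Theorem \ref{chco} for everything topological and only checking the norm statements.

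(1)$\Rightarrow$(4): By Theorem \ref{chco}, regularity gives $\theta^*=q^{-1}$, with $\theta$ a bijection onto $A(K)$. For $\psi\in F$,
$$\|\psi\|=\sup_{\|x\|\le1}|\psi(x)|\le \sup_{\|\varphi\|\le 1}|\langle \theta(\psi),\varphi\rangle| = \|\theta(\psi)\|,$$
where the inequality holds because $q$ contractive means every $x\in{\rm Ball}(E)$ maps to some $\varphi=q(x)\in{\rm Ball}(A(K)^*)$, and $\psi(x)=\langle\theta(\psi),q(x)\rangle$ (check this on $x\in K$, then extend by linearity). Combined with $\|\theta(\psi)\|\le\|\psi\|$, this shows $\theta$ is isometric.

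(4)$\Rightarrow$(3): If $\theta$ is an isometric isomorphism, then $\theta^*:A(K)^*\to E$ is an isometric weak* homeomorphism. Evaluating, $\theta^*(\delta_k)(\psi)=\psi(k)$, so $\theta^*(\delta_k)=k$. Set $\rho=(\theta^*)^{-1}$; it extends $\delta$, and uniqueness is clear since $K$ spans $E$.

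(3)$\Rightarrow$(2): For $f\in A(K)$, define $\tilde f=\hat f\circ\rho$, where $\hat f$ is the canonical weak* continuous functional on $A(K)^*$ given by $f$. Then $\tilde f$ is weak* continuous, extends $f$, and $\|\tilde f\|=\|f\|$ because $\rho$ is a surjective isometry.

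(2)$\Rightarrow$(1): Theorem \ref{chco} (2)$\Rightarrow$(1) gives regularity, and by uniqueness the extension is $\tilde f(x)=q_x(f)$. Then
$$\|q(x)\|=\sup_{\|f\|\le1}|\tilde f(x)|\le\|x\|,$$
since each such $\tilde f$ has norm $\|f\|\le1$. Hence $q$ is a contraction.

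The final identity $\theta^*=q^{-1}=\rho^{-1}$ then comes directly from Theorem \ref{chco}. The only delicate point I anticipate is the identification of ${\rm Ball}(A(K)^*)$ with the set of $c\delta_x-d\delta_y$, $c+d\le1$, together with the compatibility $\psi(x)=\langle\theta(\psi),q(x)\rangle$ used in (1)$\Rightarrow$(4). Both are already established in the proof of Theorem \ref{chco}, via Hahn--Jordan and $K\cong S(A(K))$.
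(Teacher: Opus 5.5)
Your proof is correct and is essentially the paper's argument. The only substantive step, getting an isometry out of (1), combines contractivity of $q$ with the automatic contractivity of $\theta$ (equivalently of $\theta^*=q^{-1}$) coming from $K\subseteq{\rm Ball}(E)$: the paper phrases this as $q$ and $q^{-1}$ both being contractions, you phrase it dually as $\|\psi\|\le\|\theta(\psi)\|$, and your other implications are the routine ones the paper calls easy.

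There is one slip, harmless because you never use it. The inequality $\|cx-dy\|\le 1$ for $c+d\le 1$ in your opening paragraph expresses contractivity of $q^{-1}$, not of $q$. It holds automatically by the triangle inequality, whereas $q$ itself need not be contractive. Consequently the identification of ${\rm Ball}(A(K)^*)$ that you flag as the delicate point plays no role in your argument.
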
 

 \begin{proof}   By the previous result any of (2)--(4) imply that the embedding of $K$ in  $E$ is regular,
 and $\theta^* = q^{-1} = \rho^{-1}$.
  It is easy to see that (2) is equivalent to (4), and that (4) is equivalent to (3), since $q = \rho$ is
 weak* continuous.   These easily imply (1).  Conversely, since $K \subseteq {\rm Ball}(E)$ we have that $\theta$  is a contraction on
 $F = E_*$, and hence so is $\theta^* = q^{-1}$.  Thus (1) implies (3).  \end{proof}

\subsection{Complex classical regularity}  \label{coca} The operator system complexification $A_{\bR}(K)_c$ of $A_{\bR}(K)$ is $A_{\bC}(K)$ \cite{BMcI}, so that $A_{\bC}(K)^* \cong (A_{\bR}(K)^*)_c$ (see also \cite{BH}).
Thus every $\varphi \in A_{\bC}(K)^*$ may be written as $c_1 \, \delta_{x_1} - c_2  \, \delta_{x_2} + i(c_3 \, \delta_{x_3} - c_4  \, \delta_{x_4} )$ for $x_i \in K$ and $c_i \geq 0$.   

Assume that $K$ is a compact convex set in a complex LCTVS $E$.
We assume that $E = W \oplus i W$ for a real subspace $W$ containing $K$.
For example if $K$ is the state space of a complex function system $V$ in $C(\Omega,\bC)$ then $W$ is the set of selfadjoint functionals
on $V$. 
We assume that $K$ spans
$E$, or equivalently that $K$ real spans $W$. 
We also assume that there is a complex linear functional $\varphi$ on $E$ with $K \subset 
\{ x \in E : \varphi(x) = t \}$ for a real or complex $t \neq 0$.  
For example if $K$ is the state space of a complex function system $V$ then one may take $t = 1$ and $\varphi = \delta_1$. If all these three conditions  hold (involving spanning, the subspace $W$, and hyperplane) then we say that the embedding of $K$ in $E$ is {\em preregular}. 
The argument below also works if $K \subseteq 
\{ x \in E : {\rm Re} \, \varphi(x) = t \}$ for a fixed real $t$.

 \begin{lem} Let $K$ be a compact convex set in a complex LCTVS $E$. There exists a surjective linear map $A(K)^* \to E$ taking
 $\delta_{x}$ to $x$ for each $x \in K$.  It is one-to-one if the embedding of $K$ in $E$ is  preregular.
 \end{lem}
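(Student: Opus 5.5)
The plan is to reduce to the real case just proved, using the decomposition $A_{\bC}(K)^* \cong (A_{\bR}(K)^*)_c$ from the start of this subsection. Every $\varphi \in A_{\bC}(K)^*$ can be written as $\varphi = \varphi_1 + i\varphi_2$. Here $\varphi_1,\varphi_2$ are the unique selfadjoint functionals, i.e.\ complexifications of real functionals on $A_{\bR}(K)$, namely $\varphi_j(f)$ is determined by its values on real $f$. Each $\varphi_j$ has the form $c_1\delta_{x_1} - c_2\delta_{x_2}$ with $x_i \in K$ and $c_i \ge 0$.

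For the existence statement, apply the real Lemma to $K$ viewed inside the real LCTVS $E$; it gives a real linear map $q_{\bR}: A_{\bR}(K)^* \to E$ with $\delta_x \mapsto x$. Define $Q(\varphi_1 + i\varphi_2) = q_{\bR}(\varphi_1) + i\,q_{\bR}(\varphi_2)$. This is well defined because the selfadjoint decomposition is unique. It is complex linear by the usual complexification argument; the only point to check is $Q(i\varphi) = iQ(\varphi)$, which is immediate from $i(\varphi_1 + i\varphi_2) = -\varphi_2 + i\varphi_1$. Also $Q(\delta_x) = x$, since $\delta_x$ is selfadjoint. With this definition the range is ${\rm Span}_{\bC}(K)$. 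The claimed surjectivity therefore presumes $E = {\rm Span}(K)$, as in the real Lemma's proof, and I will note this standing assumption.

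For injectivity under preregularity, suppose $Q(\varphi_1 + i\varphi_2) = 0$, so $q_{\bR}(\varphi_1) + i\,q_{\bR}(\varphi_2) = 0$ in $E$. Both $q_{\bR}(\varphi_j)$ lie in the real span of $K$, which is contained in $W$. Since $E = W \oplus iW$, we get $q_{\bR}(\varphi_1) = q_{\bR}(\varphi_2) = 0$. It then remains to show that $q_{\bR}$ is injective, i.e.\ to rerun the reversibility argument of the real Lemma. That argument needed a \emph{real} linear functional on ${\rm Span}_{\bR}(K)$ which is constant and nonzero on $K$.

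This is the one step needing care. We are given a complex $\varphi$ with $\varphi(K) = \{t\}$, $t \neq 0$. If ${\rm Re}\, t \ne 0$, take ${\rm Re}\,\varphi$, which is real linear and equals ${\rm Re}\, t$ on $K$. Otherwise take ${\rm Im}\,\varphi$, which equals ${\rm Im}\, t \neq 0$ on $K$. In the variant hypothesis ${\rm Re}\,\varphi = t$ on $K$, take ${\rm Re}\,\varphi$ directly; there we need $t \neq 0$.

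With such a real functional $\psi$, suppose $c_1x_1 - c_2x_2 = d_1y_1 - d_2y_2$. Applying $\psi$ gives $c_1 + d_2 = c_2 + d_1$. Then affineness of $\delta$ gives equality of the corresponding functionals, exactly as in the real proof. Hence $\varphi_1 = \varphi_2 = 0$, and $Q$ is one-to-one.
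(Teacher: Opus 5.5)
Your proposal is correct and is essentially the paper's argument. The paper runs the real two-term computation separately on the real and imaginary parts of a four-term decomposition $c_1\delta_{x_1}-c_2\delta_{x_2}+i(c_3\delta_{x_3}-c_4\delta_{x_4})$, and your definition $Q=(q_{\bR})_c$ packages exactly that computation. Your version has the merit of making explicit three things the paper uses tacitly: uniqueness of the selfadjoint decomposition $\varphi=\varphi_1+i\varphi_2$ for well-definedness, the splitting $E=W\oplus iW$ to separate the vector equation for injectivity, and the standing hypothesis $E={\rm Span}(K)$ for surjectivity.
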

 
\begin{proof}   Suppose that 
$$\varphi = c_1 \, \delta_{x_1} - c_2  \, \delta_{x_2} + i(c_3 \, \delta_{x_3} - c_4  \, \delta_{x_4} ) = d_1 \, \delta_{y_1} - d_2  \, \delta_{y_2} + i(d_3 \, \delta_{y_3} - d_4  \, \delta_{y_4} )$$ on
$A_{\bC}(K)$ for $x_i , y_i \in K$ and $c_i  , d_i \geq 0$.  Applying to $1$ we get that $c_1 \, - c_2 + i(c_3 - c_4) = d_1 - d_2
+ i (d_3 - d_4)$. So  $c_1 \, - c_2 = d_1 - d_2$
and $c_3 - c_4 = d_3 - d_4$.  
So $c_1  \, \delta_{x_1} +  d_2  \, \delta_{y_2} = c_2  \, \delta_{x_2} + d_1 \, \delta_{y_1}$.  Dividing by
$c_1 + d_2 = c_2+d_1$ and using that $\delta : K \to A_{\bC}(K)^*$ is affine and one-to-one (by the point separation of 
$A_{\bR}(K)$)
we obtain $c_1 \, x_1 - c_2  \, x_2 = d_1 y_1 - d_2  \, y_2$ in $E$.   
Similarly $c_3 \, x_3 - c_4  \, x_4= d_3 y_3 - d_4  \, y_4$ in $E$.   

 This defines a linear map $A(K)^* \to E$ taking
 $\delta_{x}$ to $x$ for each $x \in K$.  Indeed as we said above any element in $A(K)^*$ is a  difference of four 
  nonnegative multiples of  a state, and every state on $A(K)$ is $\delta_{x}$ for some $x \in K$.

The argument is reversible if 
the embedding of $K$ is preregular.  
Indeed conversely, 
suppose that  $c_1 \, x_1 - c_2  \, x_2 + i (c_3 \, x_3 - c_4  \, x_4) = d_1 y_1 - d_2  \, y_2 + i (d_3 y_3 - d_4  \, y_4)$ in $E$
 for $x_i , y_i \in K$ and $c_i  , d_i \geq 0$. 
Applying $\varphi$ we see that $(c_1-c_2) + i (c_3 \,  - c_4) = d_1-d_2 + i (d_3 \,  - d_4)$.  So $c_1 \, - c_2 = d_1 - d_2$
and $c_3 - c_4 = d_3 - d_4$.   Hence, $$\frac{c_1x_1 + d_2y_2}{c_1+d_2} = \frac{c_2x_2 + d_1y_1}{c_2+d_1} \; {\rm and} \; \frac{c_3x_3 + d_4y_4}{c_3+d_4} = \frac{c_4x_4 + d_3y_3}{c_4+d_3}.$$  Since $K$ is convex, by applying $\delta:K \rightarrow S(A(K))$, which is affine, we get
$$\frac{c_1\delta_{x_1} + d_2\delta_{y_2}}{c_1+d_2} = \frac{c_2\delta_{x_2} + d_1\delta_{y_1}}{c_2+d_1} \; {\rm and} \; \frac{c_3{x_3} + d_4\delta_{y_4}}{c_3+d_4} = \frac{c_4\delta_{x_4} + d_3\delta_{y_3}}{c_4+d_3}.$$
It follows that 
$ c_1 \, \delta_{x_1} - c_2  \, \delta_{x_2} + i(c_3 \, \delta_{x_3} - c_4  \, \delta_{x_4} ) = d_1 \, \delta_{y_1} - d_2  \, \delta_{y_2} + i(d_3 \, \delta_{y_3} - d_4  \, \delta_{y_4} )$.

Grouping the real and complex parts of the coefficients, and then the terms with positive coefficients and the terms with negative coefficients, and using that $K$ is convex,
any $x \in E$ may be written as
$c_1 \, x_1 - c_2  \, x_2 + i (c_3 \, x_3 - c_4  \, x_4)$  for $x_i \in K$ and $c_i \geq 0$. So the map is surjective. 
\end{proof}

Thus if the embedding of $K$ in $E$ is  preregular  we obtain a well defined linear isomorphism $q : E \to A(K)^*$ taking
$c_1 \, x_1 - c_2  \, x_2 + i (c_3 \, x_3 - c_4  \, x_4)$ for  $x_i \in K$ and $c_i \geq 0$,
to $c_1 \, \delta_{x_1} - c_2  \, \delta_{x_2}  + i(c_3 \, \delta_{x_3} - c_4  \, \delta_{x_4} )$.
 It is surjective by the argument using Kadison's theorem.   So $E \cong A(K)^*$ linearly. 

Note that $q_y(f) =  f(y)$ if $y \in K, f \in
 A_{\bC}(K)$, so that $x \mapsto q_x(f)$ defines a linear functional $\tilde{f}$  on $E$ extending $f$.  Moreover it is the unique extension of $f$ to $E$ since $E  = {\rm Span} ( K)$.
 We say that  the (complex preregular)  embedding of $K$ in $E$ is {\em regular}
 if $q$ is continuous with 
 respect to the weak* topology of $A(K)^*$.
 In this case $\tilde{f}$  is a continuous extension of $f$ to $E$.  Indeed as in the previous section $\tilde{f}$ is continuous on $E$ for all $f \in A(K)$ if and only if $q$ is continuous with 
 respect to the weak* topology of $A(K)^*$. 
 That is: 
  
\bigskip

{\bf Remark.} As in the previous section, the assumption that $q$ is continuous with respect to the weak* topology of $A(K)^*$ is equivalent to 
every $f \in A(K)$ having a continuous extension to $E$.   

\bigskip

 We have $A_{\bC}(K;E) = A_{\bC}(K)$ in Alfsen's notation of his Corollary II.2.3. 
  The equivalent of Alfsen's definition of regularity (matching 
 the real case) would seem to need  also that $q$ is a homeomorphism with respect to the weak* topology of $A(K)^*$. However as in the real case we shall see that this is   automatic   if $q$ is continuous.
 
 Suppose that $E$ and $F$ are 
 complex LCTVS's in duality, with each canonically being the dual space of each other.  (As in the real case this is always possible to arrange, by taking for example $F$ to be the continuous dual of $E$ with the `weak* topology' (see e.g.\ V.I in 
 \cite{Conw}), 
 and giving $E$ the $\sigma(E,F)$ topology.  
   Then $K$ is compact in the weak* topology induced by $E = F^*$, indeed  the map $E \to F^*$ is a homeomorphism. 
 So we have a canonical  linear map $\theta : F \to A_{\bC}(K)$ given by restriction of $\varphi \in F$ to $K$. 
 It is one-to-one since $K$ is spanning.   
 
 Note that   if $K$ is preregular then   $\theta(F)$ is dense in $A(K)$, since the annihilator of $\theta(F)$ is contained in 
  ${\rm Ker} (q^{-1}) = (0)$.    Indeed if $\varphi \in \theta(F)^\perp$ and $\varphi  =  c_1 \, \delta_{x_1} - c_2  \, \delta_{x_2}  + i(c_3 \, \delta_{x_3} - c_4  \, \delta_{x_4} )$  as above,  and if $\psi \in F$, then $$\varphi(\psi_{|K}) = c_1 \psi(x_1) - c_2 \psi(x_2) + i( c_3 \psi(x_3) - c_4 \psi(x_4)). $$
  But this equals $\psi(c_1 x_1 -c_2  \, x_2 + i (c_3 \, x_3 - c_4  \, x_4) ) = 0.$
 So $q^{-1}(\varphi) =  c_1 x_1 -c_2  \, x_2 + i (c_3 \, x_3 - c_4  \, x_4) = 0$.  
 Note that the hyperplane condition follows again 
  if $1 \in \theta(F)$.  Also, $\theta^{-1}$ is continuous as before with respect to either the norm or weak topology on $A(K)$, and the weak topology on $F$.

  \begin{thm} \label{chco2} For a compact convex set $K$ which spans  
  a complex LCTVS $E$,
  the following are equivalent:
    \begin{enumerate}
        \item [{\rm (1)}] The embedding of $K$ in  $E$ is regular. 
  \item [{\rm (2)}]  Every $f \in A(K)$ has a continuous extension to $E$. 
  \item [{\rm (3)}]   The map $\delta : K \to A(K)^*$ extends (necessarily uniquely)  to a 
linear  isomorphism and homeomorphism $\rho : E \to A(K)^*$, the latter with the weak* topology.
(If the embedding of $K$  is preregular this is just saying that  
the map $q$ above is a weak* homeomorphism.)    
    \end{enumerate} 
     If these hold then    $\theta^* = q^{-1} = \rho^{-1}$, and $E$ has a normed predual which is linearly  isometric to $A(K)$. 
  Indeed the TVS dual $(F,\tau)$ of $E$ has a complete norm with respect to which 
  $\tau$ agrees with the weak topology associated with the norm.  
  One may choose this norm on $F$ such that $\theta$ above is isometric, and such that
  $q$ is isometric 
  with respect to the dual Banach space norm on $E = F^*$, and is a homeomorphism for the weak* topology
  on the latter dual Banach space.  Indeed this norm on $F$  is exactly 
  the canonical order unit norm, and $F$ is a complex function system (or equivalently an order unit $*$-vector space) 
   with $F_+$ the functionals on $E$ which are positive on $K$, and with `order unit'
  the unique functional that is 1 on $K$.  
    \end{thm}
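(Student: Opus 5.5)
The proof will follow that of Theorem \ref{chco} step by step, with the real decomposition of functionals on $A(K)$ replaced by the complex one from the start of Subsection \ref{coca}. Every $\varphi \in A_{\bC}(K)^*$ can be written as $c_1 \delta_{x_1} - c_2 \delta_{x_2} + i(c_3 \delta_{x_3} - c_4\delta_{x_4})$ with $x_j \in K$ and $c_j \geq 0$. Moreover, if $\|\varphi\| \leq 1$, we may take $\sum c_j \leq C$ for a universal constant $C$ (for instance $C=4$). To see this, split $\varphi$ into its selfadjoint real and imaginary parts, each of norm at most $\|\varphi\|$. Then apply the real Hahn--Jordan argument to each part via the identification $A_{\bC}(K)^* \cong (A_{\bR}(K)^*)_c$.

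For (1) $\Leftrightarrow$ (2) I will use the Remark preceding the theorem. The one point to add is that (2) gives preregularity. The constant function $1 \in A(K)$ extends to a continuous functional $\varphi$ on $E$ with $\varphi = 1$ on $K$, so $K$ lies in a hyperplane not through $0$. For the subspace $W$, I take $W$ to be the real span of $K$. This is where I expect some care to be needed. We have $E = W + iW$ since $K$ spans $E$. For directness, suppose $w \in W \cap iW$. Then for each selfadjoint $f \in A_{\bR}(K)$ with continuous complex linear extension $\tilde f$, $\tilde f$ is real on $W$ and also on $iW$, which forces $\tilde f(w)=0$. Such $\tilde f$, together with their complex combinations, give all of $\theta(F) = A_{\bC}(K)$. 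Since $\theta$ is injective (because $K$ spans $E$), the functionals in $F$ separate points of $E$, so $w=0$.

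For (1) $\Rightarrow$ (3), $q$ is continuous and bijective, so it remains to show that $q^{-1}$ is weak* continuous. I will first treat a bounded net $\varphi_t \to \varphi$ weak* in ${\rm Ball}(A(K)^*)$. Write each $\varphi_t$ with four points $x^t_j \in K$ and four coefficients $c^t_j \in [0,C]$. Given any subnet, compactness of $K$ and of $[0,C]$ lets us pass to a further subnet along which $x^t_j \to x_j'$ and $c^t_j \to c_j'$. Then $q^{-1}(\varphi_t) \to \sum c_j' x_j'$ (with the appropriate signs and factors of $i$) in $E$. Continuity of $q$ then identifies $q$ of this limit with $\varphi$, and hence the limit equals $q^{-1}(\varphi)$. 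Next, for each $\psi \in F$, the functional $\psi \circ q^{-1}$ is weak* continuous on bounded sets of $A(K)^*$. By Krein--Smulian (which holds for complex Banach spaces) it is weak* continuous. Since $E$ carries the $\sigma(E,F)$ topology, $q^{-1}$ is continuous.

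For (3) $\Rightarrow$ (1), I will copy the real argument. One checks $\theta = (\rho^{-1})^*$ on $K$, hence everywhere, so $\theta$ is a surjective weak homeomorphism $F \to A(K)$. Then $1 \in \theta(F)$ gives the hyperplane. The subspace $W$ is obtained as in the previous paragraph, or directly as $\rho^{-1}$ of the selfadjoint functionals, which contain $K$ and satisfy $A(K)^* = A(K)^*_{\rm sa} \oplus i A(K)^*_{\rm sa}$. Also $(\theta^{-1})^* = q$, so $q$ is continuous. The final assertions come from transferring the norm, order and unit of the complex function system $A_{\bC}(K)$ to $F$ via $\theta$. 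Then $q = \theta^*$ is an isometry onto the dual, and $F_+$ is the set of functionals positive on $K$, by Lemma \ref{coK} and the Kadison facts already recorded. The main obstacle is the bounded decomposition constant together with verifying the $W$-condition; the rest is identical to the real case.
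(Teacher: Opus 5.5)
Your proposal is correct and follows the paper's proof essentially step for step: preregularity plus the Remark for (1)$\Leftrightarrow$(2), the subnet/compactness argument with Krein--Smulian for continuity of $q^{-1}$, and $\theta=(\rho^{-1})^*$ with $W=\rho^{-1}(A(K)^*_{\rm sa})$ for the converse. The variations are minor. You take $W={\rm span}_{\bR}K$ and check $W\cap iW=0$ directly, which gives the same $W$ as the paper's construction from $G=\{\psi\in F:\psi(K)\subset\bR\}$. You also make explicit the bound on the four Jordan coefficients that the paper leaves to ``obvious adaptation''. One small slip: $\tilde f$ is real on $W$ and purely imaginary (not real) on $iW$, and that is what forces $\tilde f(w)=0$.
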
 

 \begin{proof}   (1) $\Rightarrow$ (2) \ Follows from 
the first Remark in this subsection.

(2)  $\Rightarrow$ (1) \ Some of this follows from 
the first Remark in this subsection.  The hyperplane condition in the definition of preregularity follows from a remark above
as in the real case. 
If the  extension condition holds then the map $\theta : F \to A(K)$ above is bijective. 
Let $G = \{ \psi \in F : \psi(K) \subset \bR \}$.  This is a closed 
 real subspace in $F$, and $F = G \oplus iG$ since $A_{\bC}(K) = A_{\bR}(K) \oplus i A_{\bR}(K)$. 
 Now $(F^*)_r  \cong  (F_r)^*$ via the map $g \mapsto {\rm Re} \, g$, which has inverse $h \mapsto \tilde{h}$ for $h \in (F_r)^*$, 
 where $\tilde{h} = h - i h(i \cdot)$.
 Note that $h(G) = 0$ iff $\tilde{h}$ is purely imaginary on $G$, and 
 $h (iG) = 0$  iff $\tilde{h}$ is purely real on $G$.   Let $W$ be the set of functionals in $F^*$ 
 which are purely real on $G$, then $K \subset W$.
 Also, $iW$ is the set of functionals in $F^*$  
 which are purely imaginary on $G$.   
 This gives a real decomposition $E = W \oplus i W$, with $K \subset W$ as desired.     
 
 That  (1) $\Rightarrow$ (3)  follows by an obvious adaption of the argument in the real case (the difference being that the `Jordan decomposition' of a functional on $A(K)$ has four terms
 as opposed to two.  
 
As in the remark after Theorem \ref{chco}, one may add if one wishes an intermediate condition, namely:
 (4) \ The map $\theta$ above is  a surjective homeomorphism between $F$ and $A(K)$ (both with the weak topology).  
 That (3) $\Rightarrow$ (4), and $\theta^* = q^{-1}$, is as in the real case.
To see that (4) $\Rightarrow$ (1): If $\theta$ is  a homeomorphism then  $q$ is  a homeomorphism, and the hyperplane condition follows 
 if $1 \in \theta(F),$  as in the real case.  If $W = (\theta^*)^{-1} (A_{\bC}(K)^*_{\rm sa})$
 we have $W \oplus i W = E$, and it is easy to see that  $K \subset W$.
 So $K$ is regularly embedded. 

   The final assertions are as in the real case.    In the complex case
  $A(K)$ is the generic archimedean order unit $*$-vector space by the complex version of Kadison's theorem
 \ref{coK}.  
  The involution on $F$ is uniquely determined since it is trivial on $F_+$ and $F$ is spanned by $F_+$.  
 \end{proof}  

 {\bf Remark.}  Again, that the canonical embedding of 
$S(V)$ in $V^*$ is preregular is easy in the complex case.
However that the canonical embedding of 
$S_{\bC}(V)$ in $V^*$  is a regular embedding is not obvious.  Indeed it essentially is precisely the complex case of Kadison's duality formula $V \cong A(S(V))$ unitally order isomorphically and bicontinuously, since this is saying that every $f \in A(S(V))$ has a 
weak* continuous extension to $V^*$, and such extensions `are precisely' the elements of $V$.
Recall that the canonical map $\iota : V \to A(S(V))$
is bicontinuous onto its closed range (by the usual equivalence of the norm and numerical radius we get $\| \iota^{-1} \| \leq 2$), and this is essentially our map $\theta$ above when $E = V^*$ and $F = V$.

 \begin{cor} \label{chco23b} For a complex dual Banach space $E$ and a compact convex set $K \subseteq {\rm Ball}(E)$ which spans  
  $E$, the following are equivalent:
    \begin{enumerate}
        \item [{\rm (1)}] The embedding of $K$ in  $E$ is regular 
         and the map $q$ above is a contraction. 
  \item [{\rm (2)}]  Every $f \in A(K)$ has a continuous norm preserving extension to $E$. 
  \item [{\rm (3)}]    The map $\delta : K \to A(K)^*$ extends (necessarily uniquely)  to a 
weak* homeomorphic linear isometry  $\rho : E \to A(K)^*$  (both spaces with the weak* topology). 
   \item [{\rm (4)}]   The map $\theta$ above is  a surjective isometric homeomorphism between $F = E_*$ and $A(K)$.  
    \end{enumerate} 
  In this case 
  $\theta^* = q^{-1} = \rho^{-1}$.   \end{cor}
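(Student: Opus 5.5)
The plan is to mirror the proof of Corollary \ref{chco23}, with Theorem \ref{chco2} in place of Theorem \ref{chco}. Here $E = F^*$ with $F = E_*$ and the weak* topology. First, any of (2)--(4) implies condition (2) of Theorem \ref{chco2}: in (3) and (4) the extensions come from $\rho$, respectively $\theta$, and in (2) directly. So the embedding is regular and $\theta^* = q^{-1} = \rho^{-1}$. The argument then reduces to comparing norms of $\theta$, $q$ and $\rho$.

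\textbf{(2) $\Leftrightarrow$ (4).} By Theorem \ref{chco2}, under regularity $\theta : F \to A(K)$ is a bijection, and $\theta^{-1}(f)$ is the unique continuous extension of $f$. Here uniqueness holds because $K$ spans $E$.
\begin{itemize}
\item For every $\psi \in F$, $\|\theta(\psi)\| = \sup_{k \in K} |\psi(k)| \le \|\psi\|$, since $K \subseteq {\rm Ball}(E)$. So $\theta$ is always a contraction.
\item Condition (2) says exactly that $\|\theta^{-1}(f)\| = \|f\|$, that is, $\theta$ is isometric.
\item Conversely, (4) gives norm-preserving weak* continuous extensions $\theta^{-1}(f)$.
\end{itemize}
The word ``homeomorphism'' in (4) is automatic for an isometric isomorphism once we use the weak topologies, as in Theorem \ref{chco2}.

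\textbf{(4) $\Leftrightarrow$ (3).} If $\theta$ is an isometric isomorphism, then its adjoint $\theta^* : A(K)^* \to E$ is a weak* homeomorphic surjective isometry. It equals $q^{-1}$ and agrees with $\delta_k \mapsto k$. So $\rho = q = (\theta^*)^{-1}$ gives (3).

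Conversely, given (3), $\rho$ is weak* continuous. It is therefore the adjoint of a map $A(K) \to F$, and that map is identified with $\theta^{-1}$ by evaluating on $K$. Since $\rho$ is isometric, so is its preadjoint, giving (4). This uses the standard fact that a weak*-homeomorphic isometry between duals is the adjoint of an isometry of preduals.

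\textbf{(1) and (3).} Clearly (3) implies (1), since $q = \rho$ is then an isometry. For (1) $\Rightarrow$ (3):
\begin{itemize}
\item $\theta$ is a contraction, so $q^{-1} = \theta^*$ is a contraction.
\item Hypothesis (1) says $q$ is a contraction.
\item Hence $q$ is an isometry. By Theorem \ref{chco2} it is a weak* homeomorphism, which gives (3).
\end{itemize}

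The main point to check carefully is the identification $\theta^* = q^{-1}$ in the complex setting, together with the fact that the Banach dual norm on $E = (E_*)^*$ is the one being compared. Both are supplied by Theorem \ref{chco2}, so the corollary itself is formal.
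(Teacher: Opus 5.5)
Your proposal is correct and follows essentially the same route as the paper, which refers to the proof of Corollary \ref{chco23}. That route is: use Theorem \ref{chco2} to get regularity and $\theta^* = q^{-1} = \rho^{-1}$, observe that $\theta$ is automatically a contraction because $K \subseteq {\rm Ball}(E)$, and then compare norms, with (1) $\Rightarrow$ (3) following since both $q$ and $q^{-1} = \theta^*$ are contractions. Your extra details (the preadjoint argument for (3) $\Rightarrow$ (4), and the remark that the homeomorphism in (4) is automatic) fill in steps the paper leaves as easy.
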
 

The proof is similar to that of Corollary  \ref{chco23}. 
In the nc case we will obtain `completely isometric' and `completely isomorphic' variants of the last result.  Note that any nc affine continuous contractive (w.r.t.\ the sup norm) $f : K \to M_n$ extends to a contractive matrix in $M_n(V)
= CB((M_n)_*,V) = CB^\sigma(V^*,M_n)$.   Thus such $f$ comes from a completely contractive (weak*) continuous $\tilde{f} :
E \to M_n$.

 \section{Nc regularity}

 \subsection{Nc spans} \label{ncs} Let  $K$ be a nc set embedded in a complex dual operator space $E$.
   We say that  $K$ lies  in a (weak* closed) hyperplane $H$ not passing through 0 if  there exists $\psi \in F = E_*$ and $\gamma \in \bR, \gamma \neq 0$, such that 
 $K_n \subset H_n$ for all (finite, if one wishes) $n$, where $H_n = \{ [x_{ij}] \in M_n(E) : [\langle \psi , x_{ij}] \rangle ] = \gamma \, I_n \}$.  If  $E$ has a real subspace $W$ with $W \oplus iW = E$ and $K \subset W$ as graded nc 
 sets, and if  $K$ lies  in a (weak* closed) hyperplane $H$ not passing through 0, then we say that the embedding
 of $K$ in $E$ is {\em stately}.    Note that if the  embedding
 of $K$ in $E$ is  stately then $K_1$ lies in a hyperplane $H_1$ not passing through 0, and also lies in $W$ above.   In the case that $E$ is also a matrix ordered $*$-vector space then one often considers the case 
 that $K_n \subset M_n(E)_+$ for all $n$, and 
 then $W$ is usually be taken to be $E_{\rm sa}$.
 
 For a complex operator system $V$ the embedding of $K = {\rm ncS}(V)$ in $V^*$ is stately.
 Indeed if $W = E_{\rm sa}$ then 
 $W + iW = E$ and $K \subset W$ as graded nc sets.
 And 
 $K$ lies in the nc hyperplane $H$ 
 with $H_n$ the matrices in $M_n(V^*) = CB(V,M_n)$
 corresponding to unital maps in $CB(V,M_n)$. The nc hyperplane is defined by $1_V$ considered as an nc affine map on $E = V^*$. 
 In this case too every complex cb $u : V \to M_n$ is of the form $c_1 \, x_1 c_1 - c_2  \, x_2 c_2 + i (c_3 \, x_3 c_3- c_4  \, x_4 c_4)$  for $x_i \in K_n$ and $c_i \in M_n(\bC)^+$, by  Wittstock's representation
  of a cb map as a combination of 4 cp maps, together with the unital representation
   of cp maps in \cite[Lemma 2.2]{CE}.  This may be viewed as a nc linear combination.
 We next study general  nc linear combinations and the associated `nc span'.

 We consider `nc  spans' in $M_n(E)$  as spans or `finite linear combinations' with `matrix coefficients'.   By this we mean either `square' representations 
  $\sum_{k=1}^m \, a_k^* x_k b_k$ where $x_k \in K_n$ and $a_k, b_k
 \in M_n(\bC)$, or 
 `rectangular' representations
 $\sum_{k=1}^m \, a_k^* x_k b_k$ where $x_k \in K_{n_i}$ and $a_k, b_k
 \in M_{n_i,n}(\bC)$.  Here $m < \infty$.  We write ncSpan$_n(K_n)$ for the set of such square expressions,
 a vector subspace of $M_n(E)$, and 
 ncSpan$(K)$ for the graded set 
 of all such `rectangular' representations. By the polarization identity 
 we have 
 $$a^* x b = \frac{1}{4} \, \sum_{k=0}^3 \, i^k (b + i^k a)^* x (b + i^k a).$$
Thus we may write  $\sum_{k=1}^n \, a_k^* x_k b_k$ as a sum 
$A - B + i (C-D)$, where $A,B,C,D$ are each of form 
$\sum_{k=1}^m \, c_k^* x_k c_k$.
Here in the square case  $x_k \in K_n$ and $c_k
 \in M_n(\bC)$, while in the rectangular case
$x_k \in K_{n_i}$ and $c_k
 \in M_{n_i,n}(\bC)$.   Hence ncSpan$(K)$ (resp.\ ncSpan$_n(K_n)$) is the set of such 
 $A - B + i (C-D)$, where $A,B,C,D$ are each of the latter form.  Note that ncSpan$_n(K_n)$ is a subspace of $M_n(E)$, while ncSpan$(K)$ is a graded operator subspace of $E$, the latter considered as a graded set.
 (Note that the set of  rectangular expressions
 $d^* x d$ for some $n$ and $x \in K_{n_i}$ and $d \in M_{n_i,n}(\bC)$ is  a cone.   That is, the sum of several $c_k x_k c_k$ is again a $d^* x d$.  Similarly 
 for rectangular expressions
 $a^* x b$.)

 We say that the embedding of a nc convex set $K$ in  a complex dual operator space $E$ is
 {\em nc preregular} (resp.\ $n$-preregular) if it is stately and $E = {\rm ncSpan}(K)$ (resp.\ $M_n(E) =  {\rm ncSpan}_n(K_n)$).

\begin{cor}   \label{wascoro}If $E =  {\rm ncSpan}(K)$ then $K_1$ spans $E$.  Thus if 
 the embedding of a nc convex set $K$ in  a complex dual operator space $E$ is
  nc preregular, then the embedding of $K_1$ in $E$ is preregular.
 \end{cor}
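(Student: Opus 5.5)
Since ncSpan$(K)$ is a graded set and $E$ sits at level $1$ of $\cM(E)$, the hypothesis $E = {\rm ncSpan}(K)$ says that every $y \in E = M_1(E)$ is a finite rectangular combination $y = \sum_{k=1}^m a_k^* x_k b_k$ with $x_k \in K_{n_k}$ and $a_k, b_k \in M_{n_k,1}(\bC)$, i.e.\ column vectors in $\bC^{n_k}$. By the polarization identity recalled above, each term $a_k^* x_k b_k$ is a complex combination of four terms of the form $c^* x_k c$ with $c \in M_{n_k,1}(\bC)$. So it suffices to show that each such $c^* x c$ (a scalar compression of a level-$n$ element) lies in ${\rm Span}(K_1)$.

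For this, if $c = 0$ there is nothing to prove. Otherwise write $c = \|c\| \beta$ with $\beta \in M_{n,1}(\bC)$ an isometry, i.e.\ a unit vector. Then $c^* x c = \|c\|^2 \, \beta^* x \beta$, and $\beta^* x \beta \in K_1$ because $K$ is closed under compressions (axiom (3) of nc convexity). Hence $c^* x c \in \bR_+ K_1$, and every $y \in E$ is a complex linear combination of elements of $K_1$. Thus $K_1$ spans $E$.

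For the second assertion, assume the embedding is nc preregular, so it is stately and $E = {\rm ncSpan}(K)$. By the first part $K_1$ spans $E$. Statelyness gives a real subspace $W$ with $W \oplus iW = E$ and $K \subset W$ as graded sets, in particular $K_1 \subset W$. It also gives $\psi \in E_*$ and $\gamma \neq 0$ with $\langle \psi, x\rangle = \gamma$ for $x \in K_1$ (the $n=1$ case of $H_n$). This is exactly the hyperplane condition of the complex classical preregularity in Subsection~\ref{coca}, with $\varphi = \psi$ and $t = \gamma$. Also, $K_1$ is compact and convex: it is weak* compact by the definition of compact nc convex set, and convex since convex combinations are nc convex combinations with scalar coefficients $\alpha_i = \sqrt{t_i}$. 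Hence the embedding of $K_1$ in $E$ is preregular.

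There is no real obstacle here. The only point needing care is the bookkeeping at level one: the coefficients in the rectangular representation are columns, so the compression axiom applies after normalizing them to unit vectors.
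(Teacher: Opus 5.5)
Your proof is correct and follows essentially the same route as the paper's: you polarize the level-one nc combination into terms $c^* x c$, use nc convexity (here the compression axiom) to place these in the cone over $K_1$, and then read off the level-one hyperplane and the subspace $W$ from statelyness. Your normalization $c^*xc = \|c\|^2 \beta^* x \beta$ is slightly more careful than the paper. The paper asserts that each grouped sum $\sum_k c_k^* x_k c_k$ lies in $K_1$, when strictly it lies in $\bR_+ K_1$; this is harmless for spanning.
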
  
 
   \begin{proof} Indeed writing $x \in E$ (level 1) as $A - B + i (C-D)$ as above,
 each of the $1 \times 1$ matrices $A,B,C,D$ are of form $\sum_{k=1}^m \, c_k^* x_k c_k \in K_1$ since $K$ is nc convex.
 We said above that if the  embedding
 of $K$ in $E$ is  stately then $K_1$ lies in a hyperplane $H_1$ not passing through 0, and also lies in $W$ above. 
 \end{proof}

Next we observe that such nc linear combinations are `preserved' by linear maps $T : E \to Y$
that map $K$ into $L$ (resp.\ $T_n(K_n) \subseteq L_n$), for nc convex sets $K, L$ in $E, Y$ respectively.  
Moreover we have:

        \begin{prop} \label{dum}   Let $K$ and $L$ be nc convex sets embedded in complex 
        operator spaces
        $E, Y$ respectively, with $K$ stately embedded.   A nc affine map $u : K \to L$ (resp.\ $u : K_n \to L_n$)  extends uniquely
to a linear map $\hat{u}$ from ncSpan$(K)$  to ncSpan$(L)$ (resp.\  ncSpan$_n(K_n)$  to ncSpan$_n(L_n)$)
satisfying $\hat{u}(c^* x c) = c^* u(x) c$ for all $x \in K$ and all scalar matrices $c$  of appropriate size (resp.\ $c$ in $M_n(\bC)$).    Moreover if $L$ is also stately                                                                                                                                                                                                  embedded in $Y$ then 
$\hat{u}$ is one-to-one on ncSpan$(K)$ (resp.\ ncSpan$_n(K_n)$) if $u$ is one-to-one on $K$
(resp.\ $K_n$).
        \end{prop}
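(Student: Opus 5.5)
The plan is to mimic the classical lemmas of Section 2, with matrix coefficients replacing scalar ones. By the polarization identity every element of ncSpan$(K)$ can be written as $A-B+i(C-D)$, where each of $A,B,C,D$ has the form $\sum_k c_k^* x_k c_k$. As noted above, this set of positive expressions is a cone, so each term can be condensed to a single expression $d^* x d$ with $x\in K_m$ and $d\in M_{m,n}$. We then define
$$\hat u\Big(\sum_k a_k^* x_k b_k\Big)=\sum_k a_k^* u(x_k) b_k .$$
The constraint $\hat u(c^*xc)=c^*u(x)c$ together with linearity forces this formula, so uniqueness is immediate; the whole content is well-definedness. The square case ncSpan$_n(K_n)$ is handled identically with all sizes equal to $n$.

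For well-definedness, we reduce to showing that $\sum_k c_k^* x_k c_k=\sum_l e_l^* y_l e_l$ implies the corresponding equality after applying $u$. Given two representations of the same element, polarize both, expand the positive parts, and move terms to opposite sides. Because $K\subset W$ with $W\oplus iW=E$, an equation $P-Q+i(R-S)=P'-Q'+i(R'-S')$ with all of $P,\dots,S'$ in the ``cone'' splits into real and imaginary parts: $P+Q'=P'+Q$ and $R+S'=R'+S$. Each side is again a positive expression, so it suffices to treat an equality
$$\sum_k c_k^* x_k c_k=\sum_l e_l^* y_l e_l.$$
Here we use the hyperplane condition. Applying $\psi^{(n)}$ gives $\gamma\sum c_k^*c_k=\gamma\sum e_l^*e_l=:P\ge 0$, so both sides have the same ``weight'' $P$.

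First suppose $P$ is invertible. Set $\alpha_k=c_kP^{-1/2}$ and $\beta_l=e_lP^{-1/2}$. Then $\sum\alpha_k^*\alpha_k=I$, so
$$z=\sum\alpha_k^* x_k\alpha_k=\sum\beta_l^* y_l\beta_l$$
is a nc convex combination lying in $K_n$. Since $u$ is nc affine, it respects nc convex combinations; we plan to derive this from the direct sum and compression axioms via the standard dilation $z=V^*(\oplus x_k)V$ with $V$ an isometry. Hence
$$\sum\alpha_k^* u(x_k)\alpha_k=u(z)=\sum\beta_l^* u(y_l)\beta_l,$$
and conjugating by $P^{1/2}$ yields the claim.

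If $P$ is not invertible, we compress. Let $p$ be the range projection of $P$. Each $c_k$ satisfies $c_k=c_kp$, since $c_k^*c_k\le P$. We work in the corner $pM_np$, identified with $M_r$ by an isometry, and run the same argument there. This is the step I expect to need the most care: matching rectangular sizes, and checking that nc affine maps preserve nc convex combinations with rectangular isometries. I would first try to reduce everything to the compression and direct-sum axioms.

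For injectivity, suppose $L$ is stately embedded and $\hat u(\xi)=0$ with $\xi=A-B+i(C-D)$. Then $\hat u(A)-\hat u(B)+i(\hat u(C)-\hat u(D))=0$ in $Y$, and the real decomposition $W_L\oplus iW_L$ in $Y$ gives $\hat u(A)=\hat u(B)$ and $\hat u(C)=\hat u(D)$. Writing $A=c^*xc$ and $B=e^*ye$, the hyperplane condition for $L$ gives $c^*c=e^*e=P$. After the same normalization and compression, we get $u(\alpha^*x\alpha)=u(\beta^*y\beta)$ for elements of $K_r$. Injectivity of $u$ gives $\alpha^*x\alpha=\beta^*y\beta$, so $A=B$, and likewise $C=D$; hence $\xi=0$.
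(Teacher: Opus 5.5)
Your overall reduction matches the paper's: you polarize, split into real and imaginary parts using $W\oplus iW$, reduce to an equality $\sum_k c_k^*x_kc_k=\sum_l e_l^*y_le_l$, and apply the hyperplane so both sides have the same weight $P=\sum_k c_k^*c_k=\sum_l e_l^*e_l$. The normalization step that follows, however, has a genuine gap. You divide by $P^{1/2}$ and, when $P$ is singular, compress to the range projection $p$ of $P$ and treat $P$ as invertible there. That is only valid at finite levels. In this paper the levels $n$ run over cardinals up to some $\kappa$. At an infinite level $P$ can be injective with non-closed range, e.g.\ $P=\mathrm{diag}(1,\frac14,\frac19,\dots)$ at level $\aleph_0$. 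Then $p=I_n$, the compression does nothing, and $c_kP^{-1/2}$ is not defined as written. There is a second problem in the level-$n$ (``resp.'') version of the statement, where $u$ is only given, and only assumed injective, on $K_n$. Both your condensation $\sum_k c_k^*x_kc_k=d^*(\oplus_k x_k)d$ and your passage to $pM_np\cong M_r$ leave level $n$. So neither your well-definedness argument nor your injectivity argument (which needs $u$ injective on $K_r$) goes through there.

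The missing idea, which is what the paper does, is to pad instead of normalize. Scaling all coefficients by $t>0$ scales both sides by $t^2$, so you may assume $P\le I_n$. Put $a=(I_n-P)^{1/2}$ and fix any $z\in K_n$. Then $\sum_k c_k^*x_kc_k+aza=\sum_l e_l^*y_le_l+aza$, and each side is a genuine nc convex combination at level $n$, with the same $a$ because the weights agree. Applying $u$ gives $\sum_k c_k^*u(x_k)c_k+au(z)a=\sum_l e_l^*u(y_l)e_l+au(z)a$, and you cancel $au(z)a$. For injectivity, do the same with the weight obtained from $L$'s hyperplane. Pad both positive expressions by $aza$, observe that their images under $u$ coincide, use injectivity of $u$ on $K_n$, and cancel $aza$. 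Nothing is inverted and the level never changes, so this works at every cardinal level and in the single-level version. For the full map $u:K\to L$ you could alternatively repair your route with the polar decomposition $C=VP^{1/2}$ of the column $C$ with entries $c_1,\dots,c_m$ (so $V^*V=p$) plus a density argument. The padding is simpler, though, and is needed anyway at a single level.
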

        
        \begin{proof} First define $\hat{u}$ on $\sum_{k=1}^m \, c_k^* x_k c_k$ to be 
$\sum_{k=1}^m \, c_k^* u(x_k) c_k$.  To see that the latter is well defined first suppose that $\sum_{k=1}^m \, c_k^* x_k c_k = \sum_{k=1}^r \, d_k^* y_k d_k$.   Then applying the hyperplane map 
we get 
$\sum_{k=1}^m \, c_k^*  c_k = \sum_{k=1}^r \, d_k^*  d_k$.  We may assume that $\sum_{k=1}^m \, c_k^*  c_k \leq I_n$.  Let $a$ be a square root of $I_n - \sum_{k=1}^m \, c_k^*  c_k$.  Then
  for any $z$, 
$$u_n(\sum_{k=1}^m \, c_k^*  x_k c_k + a z a)
= \sum_{k=1}^m \, c_k^* u(x_k) c_k + a u(z) a.$$
Similarly for the $d_k, y_k$ expression, and it follows that
$\hat{u}$ is well defined on $\sum_{k=1}^m \, c_k^* x_k c_k$.
Similarly $\hat{u}$  extends to be  well defined and real linear on expressions 
$\sum_{k=1}^m \, c_k^* x_k c_k - \sum_{k=m+1}^r \, c_k^* x_k c_k$.
Indeed if $\sum_{k=1}^m \, c_k^* x_k c_k - \sum_{k=m+1}^r \, c_k^* x_k c_k$ equaled a similar expression with terms $d_k^* y_k d_k$,
then one may take negative term to the other side, and then apply the previous argument.  Finally we check that 
$\hat{u}$  extends to be  well defined and complex  linear on ncSpan$(K)$.  Suppose that $v \in {\rm ncSpan}(K)$  
 is written in two ways  as a sum 
$A - B + i (C-D) = A' - B' + i (C'-D') $, where $A,B,C,D, A',B',C',D'$ are each of form 
$\sum_{k=1}^m \, c_k^* x_k c_k$.  Since $E = W \oplus iW$ it follows that $A - B = A' - B'$ and $C-D = C'-D'$. 
By  the previous argument $\hat{u}$ is well defined on each of these two, hence is  well defined on $v$. 

So $\hat{u}$ is well defined on ncSpan$(K)$ (resp.\ ncSpan$_n(K_n)$), and defines 
a linear map $\hat{u}$ from ncSpan$(K)$  to ncSpan$(L)$ (resp.\  ncSpan$_n(K_n)$  to ncSpan$_n(L_n)$).  
This map takes an expression 
$$\sum_{k=1}^m \, c_k^* x_k c_k - \sum_{k=m+1}^r \, c_k^* x_k c_k + i (\sum_{k=r+1}^s \, c_k^* x_k c_k 
- \sum_{k=s+1}^t \, c_k^* x_k c_k)$$ 
to the same expression with $x_k$ replaced by $u(x_k)$.
It is clearly the unique linear map $v$ from ncSpan$(K_n)$  to ncSpan$(L_n)$ such that $v(c^* x c) = c^* u(x) c$ for all appropriate $c, x$.  The latter property is automatic if $v = w_n$ for linear $w : {\rm Span}(K_1) \to Y$ with 
$w_n = u_n$ on $K_n$.

Similarly  $\hat{u}$ is one-to-one on ncSpan$(K)$ (resp.\ ncSpan$_n(K_n)$) if $u$ is one-to-one on $K$
(resp.\ $K_n$), and $Y$ has a real subspace $Z$ with $Z + iZ = Y$ and $L \subset Z$ as graded nc sets. 
We also suppose that $L$ lies in a nc hyperplane just as $K$ does.
   For suppose that 
$$\sum_{k=1}^m \, c_k^* u(x_k) c_k - \sum_{k=m+1}^r \, c_k^* u(x_k) c_k + i (\sum_{k=r+1}^s \, c_k^* u(x_k) c_k 
- \sum_{k=s+1}^t \, c_k^* u(x_k) c_k)$$
equalled a similar expression with the $c_k$
and  $x_k$ replaced by $d_k$ and $y_k$.
By the property above involving $Z$ we may deal with
the `real and imaginary parts' separately.
For example for the `real part'  we take all negative terms to the other side of the equation to get
$$\sum_{k=1}^m \, c_k^* u(x_k) c_k + 
\sum_{k=m'+1}^{r'} \, d_k^* u(y_k) d_k =
\sum_{k=1}^{m'} \, d_k^* u(y_k) d_k  + 
\sum_{k=m+1}^{r} \, c_k^* u(x_k) c_k.$$
Then applying the hyperplane map  we get 
$$\sum_{k=1}^m \, c_k^*  c_k + 
\sum_{k=m'+1}^{r'} \, d_k^*  d_k =
\sum_{k=1}^{m'} \, d_k^*  d_k  + 
\sum_{k=m+1}^{r} \, c_k^*  c_k.$$
We may assume that this is a contraction, $b$ say.
Let $a$ be a square root of $I_n - b$.  Then for fixed $z \in K_n$ we have 
$$u_n(\sum_{k=1}^m \, c_k^*  x_k c_k + 
\sum_{k=m'+1}^{r'} \, d_k^* y_k d_k + a z a) \\
= \sum_{k=1}^m \, c_k^* u(x_k) c_k + \sum_{k=m'+1}^{r'} \, d_k^* u(y_k) d_k + a u(z) a.$$
Similarly for $u_n$ applied to 
the $\sum_{k=1}^{m'} \, d_k^* y_k d_k  + 
\sum_{k=m+1}^{r} \, c_k^* x_k c_k$ expression. If $u$ is one-to-one on $K$
(resp.\ $K_n$) it follows that
$$\sum_{k=1}^m \, c_k^*  x_k c_k + 
\sum_{k=m'+1}^{r'} \, d_k^* y_k d_k + a z a =
\sum_{k=1}^{m'} \, d_k^* y_k d_k  + 
\sum_{k=m+1}^{r} \, c_k^* x_k c_k + aza.$$
It follows that
$$\sum_{k=1}^m \, c_k^*  x_k c_k - \sum_{k=m+1}^{r} \, c_k^* x_k c_k = \sum_{k=1}^{m'} \, d_k^* y_k d_k -
\sum_{k=m'+1}^{r'} \, d_k^* y_k d_k.$$
Similarly for the `imaginary part', and now it is clear that $\hat{u}$ is one-to-one on ncSpan$(K)$ (resp.\ ncSpan$_n(K_n)$).
\end{proof}

{\em Remark.} If $K_c$ is the complexification from \cite{BMcI} of a nc convex set $K$,
then  ${\rm ncSpan}(K_c)$ is equal to ${\rm ncSpan}_{\bC}(K)$. 
Indeed $a^* (x + iy) b = a^* x + ia^* yb$.  
This can be translated into a statement about the nc span in $E$,  if  we allow rectangular matrices in the expressions
in the span.
E.g.\ if $g \in M_n(E)$  suppose that $g$ is a finite complex nc combination in $E_c$ 
of terms of form $a^* (x+iy) a$.  Apply the map $c$ to both sides.   Compressing this by $\vec e_1 = [I \; \; 0]$ we obtain  $g = \vec e_1^T c(a)^*
c(x,y) c(a) \vec e_1 + \cdots$,  a `cross level combination'.

 \subsection{Complex nc regularity}  \label{cncc}  In our study of nc regular embeddings we insist that 
 our LCTVS  $E$ is a dual operator space,  for reasons discussed in the introduction. 
 For $\bF = \bC$ the nc case is somewhat similar to the basic complex case (Section \ref{coca}).    
 In particular, Theorem  \ref{chco} has a nc variant Theorem  \ref{chco3}. 
One difference though is that `spans' at higher levels are spans with `matrix coefficients', as in Section \ref{ncs}. 
 
  Let $K$ be a nc compact convex set in  a dual operator space $E = F^*$.   Since sometimes we will be embedding 
  a nc compact convex set in one LCTVS, into a different LCTVS such as $\bA(K)^*$ for example.
  We stress that the embedding of $K$ in $E$ needs to be a 
 nc affine map (which is one-to-one and a homeomorphism onto its compact range at each level). 
  Suppose further that $K$ is nc spanning in either of the two senses in  Section \ref{ncs}.   We have a canonical  linear map $\theta : F \to \bA(K)$ given by `restriction' $\varphi_{|K}$ of $\varphi \in F$ to $K$ (on $K_n$ this is $(\varphi^{(n)})_{|K_n}$).  
 We say in this case that $\varphi$ is a {\em linear-extension} of the nc function $(\varphi^{(n)})_{|K_n}$ in $\bA(K)$.
 Similarly we may talk of  {\em linear-extensions} of certain elements $f$ in $M_m(\bA(K)) \cong \bA(K,M_m)$.  These will
 be matrices $[\psi_{ij}] \in M_m(F)$, or may be regarded, via the relation  in our introduction that w*CB$(E,M_n) \cong M_n(E_*)$, 
 as weak* continuous completely bounded linear maps
 $\Psi : E \to M_m$, with $f = (\Psi^{(n)})_{|K_n})$. 
Below all operator spaces are complex and complete.

  \begin{prop} \label{tode}     Let $K$ be a nc compact convex set in  a complex dual operator space $E = F^*$, 
 and suppose that $K$ is nc spanning in either of the two senses in  Section {\rm \ref{ncs}}.  
 Then 
 \begin{enumerate}
        \item [{\rm (1)}]        $\theta$ is completely bounded and one-to-one.
          \item [{\rm (2)}]    $\theta$ is a bicontinuous  isomorphism if and only if 
 $\theta(F) = \bA(K)$ at level 1. That is, if and only if every $f \in \bA(K)$ is  a 
 restriction of some element of $F = M_1(F)$. 
         \item [{\rm (3)}]  $\theta$  is a complete isomorphism if and only if 
 every $f \in \bA(K,M_N)$ has a weak* continuous completely bounded linear-extension 
 $\tilde{f} : E \to M_N$, that is of some element of $M_N(F)$.   Here we can take $N = \aleph_0$.
  \end{enumerate}  
     \end{prop}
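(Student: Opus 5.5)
For (1), I will first check that $\theta$ lands in $\bA(K)$. For $\varphi \in F$ the map $\varphi^{(n)}$ is weak* continuous on $M_n(E)$. It is also linear, and it commutes with scalar matrix multiplication, i.e.\ $\varphi^{(n)}(\alpha^* x \beta) = \alpha^* \varphi^{(m)}(x) \beta$. So its restriction to $(K_n)$ is a continuous nc affine function, as required.

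Complete boundedness comes next. Each $K_n$ is weak* compact, so by uniform boundedness it is norm bounded, say $\|x\| \le R_n$ on $K_n$. The key observation, however, is that boundedness on all levels is uniform. Every $x \in K_n$ is a direct sum/compression of points, and I will try to bound $\sup_n \sup_{x\in K_n}\|x\|$ by compressing to finite levels. The uniform bound should follow since $\sup_{x \in K_{\aleph_0}} \|x\|$ is finite by compactness at level $\aleph_0$, and every $K_n$ with $n \le \aleph_0$ embeds into $K_{\aleph_0}$ via direct sums with a fixed point. Thus $\|\theta^{(m)}([\varphi_{ij}])\| = \sup_n \sup_{x \in K_n} \|[\varphi_{ij}^{(n)}(x)]\| \le R \,\|[\varphi_{ij}]\|_{M_m(F)}$, where $R$ is the uniform bound. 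For injectivity: if $\varphi^{(n)}$ vanishes on $K_n$ for all $n$, then $\varphi$ vanishes on $K_1$. By Corollary \ref{wascoro} (or directly, since $\varphi^{(n)}(a^* x b) = a^* \varphi^{(n_i)}(x) b$), $\varphi$ vanishes on ncSpan$(K) = E$, so $\varphi = 0$.

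For (2), one direction is trivial. For the other, assume $\theta(F) = \bA(K)$. Then $\theta$ is a bounded linear bijection between Banach spaces; $F$ is complete since operator spaces here are complete, and $\bA(K)$ is complete with the sup norm, being closed in the product of the $C(K_n, M_n)$. The open mapping theorem then gives that $\theta^{-1}$ is bounded, so $\theta$ is bicontinuous.

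For (3), I will use that $M_N(\theta) = \theta^{(N)}$ is identified with the map $M_N(F) \to M_N(\bA(K)) \cong \bA(K, M_N)$, using w*CB$(E,M_N) \cong M_N(F)$ from the introduction. By the injectivity in (1) at each matrix level (entrywise), the surjectivity of $\theta^{(N)}$ is exactly the linear-extension condition at level $N$. If $\theta$ is a complete isomorphism then all $\theta^{(N)}$ are onto, including $N = \aleph_0$ with the infinite matrix conventions. Conversely, surjectivity at level $\aleph_0$ implies surjectivity at every finite level by compressing to corners. Then $\theta^{(\aleph_0)}$ is a bounded bijection of Banach spaces, hence has bounded inverse by the open mapping theorem. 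Since $\|(\theta^{(n)})^{-1}\| \le \|(\theta^{(\aleph_0)})^{-1}\|$ for finite $n$ (corner embedding), $\theta^{-1}$ is completely bounded. The main obstacle I expect is carefully verifying the identification $M_{\aleph_0}(\bA(K)) \cong \bA(K,M_{\aleph_0})$ and that infinite-level surjectivity controls finite levels uniformly. This is precisely why one needs $N = \aleph_0$ rather than just all finite $N$.
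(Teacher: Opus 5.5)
Your proposal is correct and takes essentially the same route as the paper. Injectivity comes from the spanning hypothesis, complete boundedness from a uniform bound on all levels of $K$ (the paper cites the nc boundedness argument of Davidson--Kennedy, which, like yours, rests on norm boundedness of $K_n$ at a large infinite level), and the open mapping theorem is applied to $\theta$ at level $1$ for (2) and at level $\aleph_0$ for (3); your corner-compression argument, showing that the bounded inverse at level $\aleph_0$ controls all finite levels uniformly, makes explicit what the paper leaves implicit.
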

        \begin{proof}  (1)\ Clearly $\theta$ is one-to-one by the spanning hypothesis.  Indeed in this case  
 if $f \in F$ is zero on $K$ (resp.\ $K_n$) then it is zero on $E$ (resp.\ $M_n(E)$).   
We further 
 have that $K$ is a bounded nc convex set. (Nc boundedness follows automatic from nc compactness by \cite[Proposition 2.5.3]{DK} or the idea therein, which relies on $K_n$ being bounded for some large enough cardinal $n$.  Of course 
 boundedness is automatic in the Banach space variant, where $E$ is a dual Banach space, as a consequence of the principle of uniform boundedness.) 
 It follows that $\| \theta_n(f) \| \leq c 
 \| f \|_n$ for $f \in M_n(F)$, where $c$ is a positive bound on 
 $K_n$ in $M_n(E)$ for all $n$.  
 So 
 $\theta$ is completely bounded and one-to-one.   
 
  (2)\ By (1) and the open mapping theorem, $\theta$ is a continuous isomorphism if and only if 
 $\theta(F) = A(K)$ at level 1. 
  
 (3)\ Similarly, $\theta$ is a complete isomorphism if and only if 
 $\theta(F) = \bA(K)$ at all levels, or even just at level $N = \aleph_0$.  That is, 
 every $f \in \bA(K,M_N)$ has a linear-extension in $M_N(F)$.  For the map
 $\theta_N : M_N(F) \to M_N(\bA(K))$ is bounded and one-to-one, thus it is a bicontinuous isomorphism (so that $\theta$ is a complete isomorphism) if it is surjective.   That is, if  every $f \in \bA(K,M_N)$ has  a 
weak* continuous completely bounded  linear-extension 
 $E \to M_N$, that is of some element of $M_N(F)$ (see the fact in our introduction that w*CB$(E,M_n) \cong M_n(E_*)$). 
 The converse is similar but easier.  \end{proof}
 
 We will see in Theorem \ref{chco3} that it is enough to take $N = 1$ in the last statement of the Proposition
 if the embedding of $K$ in $E$ is nc preregular (this is defined above 
        Proposition \ref{dum}).

  We say that (nc preregularly embedded) $K$ is {\em nc regularly embedded} in $E$ if 
 $\theta^*$ is a complete isomorphism (that is, surjective and completely bicontinuous).  This is equivalent to $\theta$ 
 being a complete isomorphism, which we characterized above as a uniform affine extension property.  
 
 \medskip
 
  {\bf Remark.} Actually we shall see
 that the `stately' condition in the definition of nc preregularity is not needed for 
 nc regularity. 
 
\begin{cor}   \label{wascorr} If 
 the embedding of a nc convex set $K$ in  a complex dual operator space $E$ is
  nc regular, then the embedding of $K_1$ in $E$ is regular.
 \end{cor}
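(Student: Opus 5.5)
The plan is to reduce the claim to the classical complex criterion, Theorem \ref{chco2}, namely condition (2) there: every $f \in A(K_1)$ has a continuous extension to $E$. First, nc regularity includes nc preregularity. So by Corollary \ref{wascoro}, $K_1$ spans $E$ and the embedding of $K_1$ in $E$ is preregular; in particular there is a real subspace $W$ with $W \oplus iW = E$ and $K_1 \subset W$, together with a weak* closed hyperplane not through $0$ containing $K_1$. It therefore only remains to verify the extension property at level 1.

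Next, take $f \in A(K_1)$. Invoke \cite[Theorem 2.5.8]{DK}, already used in the proof of Theorem \ref{ww}: restriction $\bA(K) \to A(K_1)$ is a (bicontinuous, unital order) isomorphism. Hence there is a unique $g \in \bA(K)$ with $g|_{K_1} = f$. Since the embedding is nc regular, $\theta : F \to \bA(K)$ is a complete isomorphism, and in particular surjective at level 1 (Proposition \ref{tode} (2)). So $g = \theta(\psi)$ for some $\psi \in F = E_*$, which means $\psi^{(n)}|_{K_n} = g|_{K_n}$ for all $n$, and in particular $\psi|_{K_1} = f$. Then $\psi$ is a weak* continuous linear functional on $E$ extending $f$. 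By the Remark preceding Theorem \ref{chco2}, equivalently by (2) $\Rightarrow$ (1) of that theorem, $q$ is weak* continuous, and the embedding of $K_1$ is regular.

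The main point to check is the applicability of \cite[Theorem 2.5.8]{DK}. That result is stated for nc compact convex sets in the Davidson--Kennedy setting, which is exactly our setting since $E$ is a dual operator space and $K$ is nc compact convex. If one wishes to avoid it, an alternative route works directly inside $F$. Let $\varphi \in F$ be the hyperplane functional, normalized so that $\varphi = 1$ on $K_1$. A given $f \in A(K_1)$ can be approximated in sup norm by restrictions $\psi_k|_{K_1}$ of elements of $F$, by density of $\theta(F)$ noted before Theorem \ref{chco2}. Completeness of $\bA(K)$ together with the bicontinuity of $\theta$ would then transport this Cauchy sequence to $F$. However, this requires comparing the level-1 sup norm with the nc sup norm on $\bA(K)$, which is again the content of \cite[Theorem 2.5.8]{DK}. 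So citing that theorem is the cleanest approach, and I would follow it.
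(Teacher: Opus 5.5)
Your proposal is correct and follows essentially the same route as the paper: preregularity of $K_1$ via Corollary \ref{wascoro}, lifting $f \in A(K_1)$ to some $g \in \bA(K)$ by \cite[Theorem 2.5.8]{DK}, and then using level-1 surjectivity of $\theta$ (Proposition \ref{tode}) to obtain a weak* continuous linear extension of $f$, which gives regularity by the criterion of Theorem \ref{chco2}. The alternative density/Cauchy-sequence route you sketch is not needed, and, as you observe yourself, it would in any case depend on the same Davidson--Kennedy result.
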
  
 
   \begin{proof}  We already showed that the embedding of $K_1$ in $E$ is preregular.
If $f \in A(K_1)$ then   $f$   is level 1
of a nc function $g$ in $\bA(K)$ by \cite[Theorem 2.5.8]{DK}.
Now $g$ has a continuous completely bounded  linear extension to $E$ by Proposition  \ref{tode},
hence $f$ has a continuous linear extension to $E$.
Thus  the embedding of $K_1$ in $E$ is regular.  
 \end{proof} 

 {\bf Remark.} Suppose that 
  $\theta(x) = 1_{\bA(K)}$ for $x \in F$.
   Then $x$  can be viewed as a nc affine map 
 $E \to \cM$ whose value at $k \in K_n$ is $I_n$.
 We obtain a nc hyperplane $H$ 
 with $H_n$ the matrices in $M_n(E)$ which this map $x$ maps to $I_n$.  So $K \subset H$.
 Also the value of this map at $0_n$ is 0, so that $0_n  \notin H_n$.

        \begin{prop} \label{duma}  If $K = {\rm ncS}(V)$ for a complex operator system $V$ then the  embedding $K \subset V^*$
        is nc regular. 
        \end{prop}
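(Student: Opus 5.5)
The plan has two parts: first check that the embedding is nc preregular, then show that $\theta : V \to \bA(K)$ is a complete isomorphism, using the Davidson--Kennedy nc Kadison duality together with Proposition \ref{tode}. Here $E = V^*$ with predual $F = V$, so $\theta$ is the canonical map sending $v \in V$ to the nc function $x \mapsto x(v)$ on $K$. At level $n$ this is evaluation of a ucp map $x : V \to M_n$ at $v$.

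For preregularity, the discussion at the start of Section \ref{ncs} already shows that $K = {\rm ncS}(V)$ is stately embedded. The real subspace is $W = (V^*)_{\rm sa}$, and the nc hyperplane is the one determined by $1_V$. It also shows that every cb map $u : V \to M_n$ can be written as $c_1^* x_1 c_1 - c_2^* x_2 c_2 + i(c_3^* x_3 c_3 - c_4^* x_4 c_4)$ with $x_i \in K_n$. This follows from Wittstock's decomposition into four cp maps and the Choi--Effros factorization of a cp map $\phi$ through a ucp map. For invertible $\phi(1)$ this is immediate; otherwise one perturbs, or compresses to the support of $\phi(1)$ using rectangular coefficients. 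Since $M_n(V^*) = CB(V,M_n)$, this gives $E = {\rm ncSpan}(K)$, so the embedding is nc preregular.

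Next, by Proposition \ref{tode}(3), nc regularity is equivalent to $\theta$ being a complete isomorphism. By part (1) of that result, $\theta$ is completely bounded and one-to-one. Indeed it is completely contractive, since each $x \in K_n$ is a ucp, hence completely contractive, map. It remains to show that $\theta$ is surjective at every matrix level and has completely bounded inverse.

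The key input is the nc Kadison duality of Davidson--Kennedy \cite{DK} (see their Theorem on $V \cong \bA({\rm ncS}(V))$). It says that the map $v \mapsto (x \mapsto x(v))$ is a unital complete order isomorphism of $V$ onto $\bA({\rm ncS}(V))$. A unital complete order isomorphism between operator systems is a complete isometry. Hence $\theta$ is a surjective complete isometry at all levels, in particular at level $N = \aleph_0$, and Proposition \ref{tode}(3) applies. Equivalently, every $f \in \bA(K,M_N)$ has a weak* continuous completely bounded linear extension $\tilde f : V^* \to M_N$, given by an element of $M_N(V)$. Dualizing, $\theta^*$ is a complete isomorphism, which is the definition of nc regularity.

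The main obstacle is keeping the dualities in exactly the form the paper uses. First, I must confirm that $\theta$ is precisely the Davidson--Kennedy map, so that their duality theorem applies verbatim. Second, I must confirm that weak* continuity of $\tilde f$ on $V^*$ corresponds to $\tilde f$ coming from $M_N(V)$, via w*CB$(E,M_n) \cong M_n(E_*)$. A secondary technicality is the rectangular coefficient issue in the nc span step when $\phi(1)$ is not invertible. If the DK duality is only stated for finite levels, I would use the level $N = \aleph_0$ version from \cite{DK}, or argue level by level and note that the complete isometry bounds are uniform.
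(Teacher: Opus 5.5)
Your proposal is correct and follows essentially the paper's proof. You get statelyness from the discussion in Section \ref{ncs}, and nc spanning from Wittstock's four-cp decomposition combined with the Choi--Effros factorization; you then use the Davidson--Kennedy duality $V \cong \bA({\rm ncS}(V))$ to show that $\theta$ is surjective at all levels, i.e.\ that every $f \in M_n(\bA(K))$ has a weak* continuous linear-extension coming from $M_n(V)$, with the only cosmetic difference being that you pass explicitly through Proposition \ref{tode}(3) where the paper simply states the all-levels extension property.
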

        
        \begin{proof}
       We already remarked in the last section 
 that  the embedding $K \subset E = V^*$ is stately.
 If $\varphi \in M_n( \bA(K)^*) = CB(\bA(K),M_n)$ then 
 $\varphi$ is a linear combination of four completely positive maps.
 Hence it is a nc linear combination of four nc states, 
 hence is a nc linear combination of four maps of form $\delta_x$ for $x \in K_n$. 
 Indeed any cp map in $CB(\bA(K),M_n)$ equals $c \delta_x(\cdot) c$
 for positive $c \in M_n$.   Thus $\bA(K)^*$ is the nc span of ${\rm ncS}(\bA(K))$ in the sense of Section \ref{ncs}.
  Indeed $M_n(E)$ is a nc span of $K_n$.  
  Every $f \in \bA(K)$ has a (weak*) continuous linear-extension to $E$, 
 which is an element of $F = V$.
 Indeed $V = \bA(K)$ so this extension is the obvious one.   In fact, every $f \in M_n(\bA(K))$ has a (weak*) continuous linear-extension
 mapping $E$ to $M_n$.  Indeed 
 $M_n(\bA(K)) \cong \bA(K,M_n) \cong M_n(V)$, which may be viewed as weak* continuous maps $E \to M_n$.
 \end{proof}
 
  Define $r : \bA(K)^* \to E$
 to be $\theta^*$, which is also completely bounded.  Note that 
 $$r_n([g_{ij}])([\psi_{kl}]) = [g_{ij}(\theta(\psi_{kl})] =  [g_{ij}((\psi_{kl})_{| K})].$$ 
 Note that $r_n(\delta_x) = x$ for $x \in K_n$.
 Indeed this follows from the relation 
 $$\langle \varphi , r_n(\delta_x)  \rangle = \langle
  \theta ( \varphi ) , \delta_x \rangle = \delta_x (  \varphi_{|K} ) = \langle \varphi , x \rangle,  \qquad x \in K, \varphi \in F.$$
Here $\langle \varphi , x \rangle = [\varphi (x_{ij})]$ for example, where $x = [x_{ij}] \in K_m$ say.   It follows that $\theta^*$ is surjective
(since $K$ is nc spanning). 
We see next that $\theta^* = q^{-1}$ if $K$ is nc preregularly embedded in $K$.

\begin{prop} \label{filler}  Let $K$ be a (complex) nc compact convex set in  a complex dual operator space $E = F^*$.
 If  $K$ is  nc preregularly embedded in $E$ 
 then the map $\delta : K \to \bA(K)^*$ extends (necessarily uniquely)  to a 
linear isomorphism $q : E \to \bA(K)^*$.     Indeed $q_n : M_n(E) \to CB(\bA(K),M_n)$ 
is a  linear isomorphism for all $n$.  We also have $q^{-1} = r =  \theta^*$ for $\theta$ as above, and this is 
completely bounded.  Finally, $\theta$ has  dense range  in $\bA(K)$. \end{prop}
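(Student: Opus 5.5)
The plan is to imitate the classical complex lemma at each matrix level, using Proposition \ref{dum} for the well-definedness and injectivity of the nc linear extension. I define $q$ on $M_n(E)$ through nc preregularity. Every $x \in M_n(E)$ lies in ${\rm ncSpan}(K)$, so by polarization it can be written as $A - B + i(C-D)$ with each of $A,B,C,D$ of the form $\sum_k c_k^* x_k c_k$, where $x_k \in K_{n_k}$ and $c_k \in M_{n_k,n}(\bC)$. I then set $q_n(x) = \sum \pm c_k^* \delta_{x_k} c_k$, with the matching signs and factors of $i$. Here $\delta_{x_k} \in {\rm UCP}(\bA(K), M_{n_k}) \subset M_{n_k}(\bA(K)^*)$.

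The map $\delta : K \to {\rm ncS}(\bA(K))$ is nc affine and injective, since $\bA(K)$ separates points of $K$. The embedding of $K$ in $E$ is stately by hypothesis. The target ${\rm ncS}(\bA(K))$ is stately in $\bA(K)^*$ by the discussion in Section \ref{ncs}, with $W = (\bA(K)^*)_{\rm sa}$ and the nc hyperplane given by $1_{\bA(K)}$. Proposition \ref{dum} therefore shows that $q = \hat\delta$ is a well-defined, injective linear map from ${\rm ncSpan}(K) = E$ into ${\rm ncSpan}({\rm ncS}(\bA(K)))$. The same proposition makes it compatible across levels, since $q_n = q^{(n)}$ on such combinations. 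Uniqueness of the extension holds because $E$ is the nc span of $K$ and any linear extension must respect $c^* x c$.

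For surjectivity at each level $n$, I argue as in Proposition \ref{duma}. Any $\varphi \in CB(\bA(K),M_n)$ is, by Wittstock's decomposition, a combination of four completely positive maps. By \cite[Lemma 2.2]{CE} each of these has the form $c^* \psi c$ with $\psi$ a ucp map into some $M_m$, and every such ucp map $\psi$ equals $\delta_x$ for some $x \in K_m$. That last identification is the nc Kadison duality $K \cong {\rm ncS}(\bA(K))$ from \cite{DK}, which I will quote. Hence $\varphi \in {\rm ncSpan}(\delta(K)) = q_n(M_n(E))$, so each $q_n$ is a linear isomorphism.

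Next I identify $q^{-1}$ with $r = \theta^*$. We already know $r_n(\delta_x) = x$ for $x \in K_n$, and $r$ is linear and satisfies $r(c^*\delta_x c) = c^* x c$. So $r \circ q$ is the identity on $c^* x c$, hence on $E$, and since $q$ is bijective, $r = q^{-1}$. Complete boundedness of $r = \theta^*$ follows from Proposition \ref{tode}(1). For dense range of $\theta$, I argue by contradiction. If $\theta(F)$ were not dense, some nonzero $\varphi \in \bA(K)^*$ would annihilate it, giving $\theta^*(\varphi) = 0$ and contradicting the injectivity of $r = q^{-1}$.

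I expect the main obstacle to be the surjectivity step. Identifying cp maps on $\bA(K)$ with positive multiples $c^*\delta_x c$ needs care at possibly infinite levels and needs the boundedness of the families involved. I would first reduce to level $n$ with the rectangular representations above, and then check that the cross-level combinations land in $M_n$.
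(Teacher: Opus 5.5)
Your proposal is correct and follows essentially the same route as the paper. Well-definedness and injectivity of $q$ come from Proposition \ref{dum}, with ${\rm ncS}(\bA(K))$ stately in $\bA(K)^*$, and surjectivity of each $q_n$ comes from Wittstock's decomposition, the Choi--Effros factorization of cp maps and $K \cong {\rm ncS}(\bA(K))$; then $r \circ q = {\rm id}$ on nc combinations gives $q^{-1} = r = \theta^*$, and dense range follows from ${\rm Ker}(\theta^*) = (0)$.
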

      
        \begin{proof} The uniqueness follows since  $E = {\rm ncSpan}(K)$. 
         By Section \ref{ncs}  if  $K$ is  nc preregularly embedded in $E$
                  then the map $\delta : K \to \bA(K)^*$ extends to a one-to-one linear map $q : E \to \bA(K)^*$.  Indeed 
 $$q_n (c_1^* x_1 c_1 - c_2^* x_2 c_2 + i
(c_3^* \,x_3 c_3 - c_4^*  \, x_4 c_4)) = c_1^* \, \delta_{x_1} c_1 - c_2^*  \, \delta_{x_2} c_2 + i(c_3^* \, \delta_{x_3} c_3 - c_4^*  \, \delta_{x_4} c_4)
$$ is well defined from  $M_n(E)$ into $CB(\bA(K),M_n)$.   It is also linear, and surjective onto $CB(\bA(K),M_n)$.   

Claim: $q^{-1} = r = \theta^*$.   At level 1 this is easy. 
Clearly in the notation above, $$r_n(q_n(c_1^* x_1 c_1 - c_2^* x_2 c_2 + i
(c_3^* \,x_3 c_3 - c_4^*  \, x_4 c_4)))
= c_1^* x_1 c_1 - c_2^* x_2 c_2 + i
(c_3^* \,x_3 c_3 - c_4^*  \, x_4 c_4).$$
In particular, $r_n = (q^{-1})_n$ for each $n$.
This implies that  $\theta(F)$ is dense in $\bA(K)$, since the annihilator of  $\theta(F)$ is Ker$(\theta^*) = {\rm Ker} (q^{-1}) = (0)$. 
Alternatively, $$\langle \varphi , \theta^*(q(x)) \rangle = \langle
  \theta ( \varphi ) , q(x) \rangle = q(x) (  \varphi_{|K} ) = \langle \varphi , x \rangle,  \qquad x \in E, \varphi \in F.$$ 
   Indeed this is true for $x \in K$, so true on $E = {\rm ncSpan}(K)$.   So $\theta^*(q(x))  = x$.   \end{proof}

Continuing the last proof, we have $q(x)(f) = \langle x , \theta^{-1}(f) \rangle$ for $x \in E, f \in \bA(K)$. Thus  $x \mapsto q(x)(f)$ defines a linear functional $\tilde{f}$  on $E$ extending $f$.  Moreover it is the unique linear-extension of $f$ to $E$ since $E  = {\rm ncSpan} ( K)$.   Similarly for 
$f \in M_n(\bA(K))$, that is for a continuous nc affine $f : K \to M_n$, 
$x \mapsto [q(x)(f_{ij})]$ defines a linear map $E \to M_n$ extending $f$, which is the unique extension.  We now consider continuity of these extensions.

 If the embedding of $K$ in  $E$ is nc preregular 
and if the map $q : E \to \bA(K)^*$ is continuous with 
 respect to the weak* topology of $\bA(K)^*$ then for all $f \in \bA(K)$ the map 
  $\tilde{f}$ in Proposition \ref{tode}  is a weak* continuous extension  (in the sense mentioned above Proposition \ref{tode}) 
   of $f$ to $E$.  Indeed as in 
  the early sections above $\tilde{f}$ is continuous on $E$ for all $f \in \bA(K)$ if and only if $q$ is continuous with 
 respect to the weak* topology of $\bA(K)^*$.

  \begin{thm} \label{chco3}  Let $K$ be a (complex) compact nc convex set  in   
  a complex dual operator space $E$, such that $E = {\rm ncSpan}(K)$.  
  The following are equivalent: 
    \begin{enumerate}
        \item [{\rm (1)}] The embedding of $K$ in  $E$ is nc regular.
        \item [{\rm (2)}]    For some cardinal $N \geq \aleph_0$, and for every $n \leq N$, every $f \in M_n(\bA(K))$, that is, every nc affine continuous $f : K \to M_n$, has a  weak* continuous completely bounded linear-extension to $E$.  
         \item [{\rm (3)}]  The embedding of $K$ in  $E$ is nc preregular, and every $f \in \bA(K)$ 
  has a  weak* continuous   linear-extension to $E$.  
    \item [{\rm (4)}]  The `restriction map' $\theta  : E_* \to \bA(K)$ is a complete isomorphism (or equivalently  $(\theta^*)^{-1} : E \to \bA(K)^*$  is a complete isomorphism). 
        \item [{\rm (5)}] The map $\delta : K \to \bA(K)^*$ extends (necessarily uniquely)  to a complete isomorphism and weak* homeomorphism $\rho : E \to \bA(K)^*$.  
  \end{enumerate}   In this case the map $q$ above is $\rho$ and also equals $(\theta^*)^{-1}$.  Also $F = E_*$ may be made 
  into an abstract operator system (with possibly an equivalent norm) 
  with $M_n(F_+)$  defined to be the $M_n$-valued linear maps on $E$ which are positive (as a nc function) on $K$, and with `identity' 
  the unique such map on $E$ that is 1 on $K$.  This operator system is unitally  completely order isomorphic to 
 $\bA(K)$  via $\theta$. 
   \end{thm}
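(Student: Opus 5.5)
I will prove the cycle (1) $\Leftrightarrow$ (4) $\Rightarrow$ (5) $\Rightarrow$ (2) $\Rightarrow$ (4), with (3) attached separately via (4) $\Rightarrow$ (3) $\Rightarrow$ (4). Throughout I use Proposition \ref{tode} and Proposition \ref{filler}.

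\emph{(1) $\Leftrightarrow$ (4).} This is essentially the definition. Nc regularity means $\theta^*$ is a complete isomorphism. A map between operator spaces is a complete isomorphism exactly when its adjoint is, by duality of $M_n$-levels; see \cite[Section 1.4]{BLM}.

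\emph{(4) $\Rightarrow$ (5).} Set $\rho = (\theta^*)^{-1}$. Adjoints are weak* continuous, so $\theta^*$ is a weak* continuous complete isomorphism $\bA(K)^* \to E$. By Krein--Smulian, as in the proof of Theorem \ref{chco}, its inverse $\rho$ is also weak* continuous. The identity $r_n(\delta_x) = x$ computed before Proposition \ref{filler} shows that $\rho$ extends $\delta$. Uniqueness follows from $E = {\rm ncSpan}(K)$ and Proposition \ref{dum}.

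\emph{(5) $\Rightarrow$ (2).} Take $f \in M_n(\bA(K))$. By the introduction, $f$ corresponds to a weak* continuous cb map $\bA(K)^* \to M_n$, namely $\varphi \mapsto [\varphi(f_{ij})]$. Composing with $\rho$ gives a weak* continuous cb map $E \to M_n$. Since $\rho$ extends $\delta$ and $\rho$ is linear, this composite restricts to $f$ on $K$ at every level.

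\emph{(2) $\Rightarrow$ (4).} This is Proposition \ref{tode}(3) with $N = \aleph_0$. One point needs checking: the proposition only requires level $N$. An element of $M_n(\bA(K))$ with $n \le N$ can be embedded in $M_N(\bA(K))$ by padding with zeros, so extensions at level $N$ compress to extensions at smaller levels.

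\emph{(4) $\Rightarrow$ (3).} The extension property is immediate, since $\theta$ is onto at level 1. Nc preregularity needs two things.
\begin{itemize}
\item \emph{Stately.} Let $\psi = \theta^{-1}(1)$. This gives a weak* closed nc hyperplane containing $K$, by the Remark after Corollary \ref{wascorr}.
\item \emph{The subspace $W$.} Transfer the selfadjoint part: set $W = \theta^*(\bA(K)^*_{\rm sa})$. Then $W \oplus iW = E$. Also $K \subset W$ as graded sets, because $\theta^*(\delta_x) = x$ and each $\delta_x$ is selfadjoint (it is ucp). This is the argument used for (4) $\Rightarrow$ (1) in Theorem \ref{chco2}.
\end{itemize}

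\emph{(3) $\Rightarrow$ (4).} This is the main obstacle: extension at level 1 only must be upgraded to a complete isomorphism. My plan is as follows.
\begin{itemize}
\item Proposition \ref{filler} gives, under nc preregularity, a linear bijection $q : E \to \bA(K)^*$ with $q_n : M_n(E) \to CB(\bA(K), M_n)$ bijective for all $n$, and $q^{-1} = \theta^*$.
\item By (3), $\theta$ is onto at level 1. By Proposition \ref{tode}(1),(2) it is then a bounded linear bijection $F \to \bA(K)$.
\item Since $\theta$ is bijective, $\theta_n$ is bijective at every finite level. It is also bijective at the level $N = \aleph_0$ for bounded matrices, because $\theta_N$ is the amplification of a Banach-space isomorphism.
\item The open mapping theorem on $M_N(F) \to M_N(\bA(K))$ then gives a bounded inverse. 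Combined with level $N = \aleph_0$ in Proposition \ref{tode}(3), this yields (4).
\end{itemize}
The delicate point is that surjectivity of an amplified map at infinite level is not automatic. A bounded matrix $[f_{ij}]$ over $\bA(K)$ has preimage $[\theta^{-1}f_{ij}]$, and I must show this matrix is bounded in $M_N(F)$. If it is not automatic, I would instead use $q_N$ being a bijection onto $CB(\bA(K), M_N)$ from Proposition \ref{filler}, together with the closed graph theorem applied to $q_N = (\theta^*_N)^{-1}$. That shows $\theta^*$, and hence $\theta$, is a complete isomorphism.

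\emph{Final assertions.} The identities $q = \rho = (\theta^*)^{-1}$ follow from uniqueness of the extension of $\delta$. For the operator system structure on $F$, transfer the matrix order and unit of $\bA(K)$ through the complete isomorphism $\theta$. Positivity of $\theta^{-1}(f)$ as a nc function on $K$ is exactly positivity of $f$ in $M_n(\bA(K))$. So $\theta$ becomes a unital complete order isomorphism, and the norm on $F$ is replaced by an equivalent one.
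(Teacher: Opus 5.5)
Most of your cycle is sound and close to the paper. Your (4) $\Rightarrow$ (5) is the paper's (5) $\Leftrightarrow$ (4). Your (5) $\Rightarrow$ (2) makes explicit the paper's ``assume $E=\bA(K)^*$ and apply Proposition \ref{duma}''. Your (2) $\Rightarrow$ (4) is Proposition \ref{tode}.

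Two repairs are needed.
\begin{enumerate}
\item (4) $\Rightarrow$ (1) is not ``just the definition'', because nc regularity presupposes nc preregularity. Your statehood argument in (4) $\Rightarrow$ (3) supplies this, so do that step first.
\item In (3) $\Rightarrow$ (4), the justification in your first argument is invalid, not merely delicate. A cb linear bijection need not have a surjective amplification at level $\aleph_0$. For example, the identity $\max(\ell^2)\to\min(\ell^2)$ is a completely contractive isometric bijection whose inverse is not cb. By the open mapping theorem, its $\aleph_0$-amplification therefore cannot be onto. Delete that bullet and make your fallback the proof.
\end{enumerate}

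The fallback is correct, and it is a genuinely different route from the paper's. By Proposition \ref{filler}, $r_{\aleph_0}=(\theta^*)_{\aleph_0}$ is a bounded bijection $M_{\aleph_0}(\bA(K)^*)\to M_{\aleph_0}(E)$ with inverse $q_{\aleph_0}$. The open mapping theorem bounds $q_{\aleph_0}$, hence $\sup_n\|q_n\|<\infty$, so $\theta^*$ is a surjective complete isomorphism. Then $\theta$ is bijective and $(\theta^{-1})^*=q$ is cb, so $\theta^{-1}$ is cb and $q$ is automatically weak* continuous.

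The paper instead proves (3) $\Rightarrow$ (2). It drops to level 1 (Corollary \ref{wascoro} and Theorem \ref{chco2}) to make $q$ weak* continuous. It then obtains weak* continuity of $x\mapsto[\widetilde{f_{ij}}(x)]$ into $M_N$ entrywise via Krein--Smulian.

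What your route buys:
\begin{itemize}
\item It does not actually use the level-one extension hypothesis in (3). Since $E$ is a dual Banach space and the hyperplane functional lies in $E_*$, nc preregularity alone forces nc regularity, granting Proposition \ref{filler} at infinite levels.
\item It makes the complete boundedness of the extensions explicit. The paper's entrywise argument gives only weak* continuity into $M_N$ and leaves complete boundedness implicit.
\end{itemize}
In exchange, the paper's route derives the nc statement from the classical Theorem \ref{chco2} and uses hypothesis (3) as stated.
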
 

 \begin{proof}  (1) $\Rightarrow$ (2) \ If the embedding of $K$ in  $E$ is nc regular then we may assume that $E = \bA(K)^*$,
 and we explained in the proof of Proposition  \ref{duma} that every nc affine continuous $f : K \to M_n$, has a  weak* continuous extension to $E$, for all $n$.  
 
 (2) $\Leftrightarrow$  (4) \ Proposition  \ref{tode}.
 
 (2) $\Rightarrow$  (1) \ By the last line the restriction map $\theta  : E_* \to \bA(K)$ is a complete isomorphism.  Hence $(\theta^*)^{-1} : E \to \bA(K)^*$  is a complete isomorphism and weak* homeomorphism.  
 Note that $$(\theta^*)^{-1}(k)(f) = \langle k , \theta^{-1}(f) \rangle = f(k) = \delta_k(f), \qquad f \in \bA(K), k \in K .$$ 
Thus the embedding of $K$ in $E$ is nc regular, since it is taken by $(\theta^*)^{-1}$ to the nc regular
embedding of $K$ in $\bA(K)^*$.   By the uniqueness in Proposition  \ref{filler}  we have $q = (\theta^*)^{-1}$.

It is clear that (1) and (2) imply (3), and these imply (5).

(5) $\Leftrightarrow$  (4) \ If (5) holds then the predual of the inverse of the weak* homeomorphism $E \to \bA(K)^*$
is a complete isomorphism $\rho : F \to A(K)$.   We have 
$$\rho(\psi)(k) = \delta_k(\rho(\psi)) = \langle k , \psi \rangle, \qquad \psi \in F, k \in K .$$
That is $\rho = \theta$.   

(3) $\Rightarrow$  (2) \ If (3) holds then, as we said before the theorem, 
for all $f \in \bA(K)$ the map 
  $\tilde{f}$ above  is a weak* continuous 
  extension of $f$ to $E$.  Hence as we said in the proof of Corollary 
  \ref{wascorr}, any $f \in A(K_1)$
  has a  weak* continuous extension to $E$. 
 By Corollary   \ref{wascoro} the embedding of $K_1$ in $E$ is preregular, and indeed is regular and 
we may assume that $E$ is a dual space with the weak* topology by
Theorem \ref{chco2}.   Hence  for all continuous nc affine $f : K \to M_N$, the map $x \mapsto \tilde{f}(x) = [q(x)(f_{ij})] = [\widetilde{f_{ij}}(x)]$  is a weak* continuous extension of $f$ to $E$.   This follows for example from \cite[Corollary 1.6.3 (1)]{BLM} and the Krein-Smulian 
 theorem: 
  for a bounded net  $x_t \to x$ in $E$ then $\widetilde{f_{ij}}(x_t) \to \widetilde{f_{ij}}(x)$ for all $i, j$, hence 
 $[\widetilde{f_{ij}}(x_t)] \to [\widetilde{f_{ij}}(x)]$ weak* in $M_N$, by that Corollary. 
 
The last assertions are easy, similar to the ones in Theorems \ref{chco} and \ref{chco2}.
 \end{proof}

{\bf Remarks.}   1)\ Again, that the canonical embedding of 
${\rm ncS}(V)$ in $V^*$ is nc preregular is easy in the nc complex case.
However that the canonical embedding of 
${\rm ncS}_{\bC}(V)$ in $V^*$  is a nc regular embedding is not obvious.  Indeed it essentially is precisely the complex case of Davidson and Kennedy's Kadison-Webster-Winkler duality formula $V \cong \bA({\rm ncS}(V))$ unitally complete order isomorphically, since this implies that  every $f \in M_N(\bA({\rm ncS}(V)))$ has a 
weak* continuous extension to $V^*$, and such extensions come from matrices in $M_N(V)$.
Recall that the canonical map $\iota : V \to \bA(S(V))$
 is essentially our map $\theta$ above when $E = V^*$ and $F = V$.   
 
 \smallskip
 
 2)\ Because in the last theorem $F = E_*$ may be made 
  into an abstract operator system, $E$ is an abstract  nc dual base normed space in the sense of \cite{BH}, using also  Theorem 5.5 in that paper.  
 
 \medskip
 
 The following is a `completely isometric variant'.   It is rather superficial but we include it for completeness. 
 
  \begin{cor} \label{chco24} For a complex dual operator space $E$ and a nc compact convex set $K$ such that $E = {\rm ncSpan}(K)$, with $K$ contained as a nc set in $({\rm Ball}(M_n(E)))$, the following are equivalent:
      \begin{enumerate}
        \item [{\rm (1)}] The embedding of $K$ in  $E$ is nc regular, 
         and the map $q$ above is a complete contraction. 
  \item [{\rm (2)}]  For some cardinal $N \geq \aleph_0$, and for every $n \leq N$, every  $f \in M_n(\bA(K))$, that is, every nc affine continuous $f : K \to M_n$, has a  weak* continuous linear-extension to $E$ whose 
 completely bounded norm  is $\| f \|$. 
   \item [{\rm (3)}]   The map $\theta^*$ is a complete isometry (hence is a surjective completely  isometric isomorphism
    $E \cong \bA(K)^*$). 
   \item [{\rm (4)}]   The map $\theta$ above is  a surjective completely  isometric isomorphism between $F = E_*$ and $\bA(K)$.  
  \item [{\rm (5)}]   The map $\delta : K \to \bA(K)^*$ extends (necessarily uniquely)  to a 
  weak* homeomorphic linear complete isometry  $\rho : E \to \bA(K)^*$  (both spaces with the weak* topology).
    \end{enumerate} 
  In this case 
  $\theta^* = q^{-1} = \rho^{-1}$.   \end{cor}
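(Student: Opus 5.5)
The plan is to reduce everything to Theorem \ref{chco3} and then add bookkeeping about norms, in the same way Corollary \ref{chco23} follows from Theorem \ref{chco}. First I will note that each of (1)--(5) implies that the embedding is nc regular, so that the conclusions of Theorem \ref{chco3} are available.
\begin{itemize}
\item For (1) this is part of the hypothesis.
\item For (2) it follows from Theorem \ref{chco3} (2).
\item For (3) and (4), a complete isometry is in particular a complete isomorphism. Here I use Theorem \ref{chco3} (4); for (3) I note that $\theta^*$ surjective and completely isometric forces $\theta$ to be a complete isometry onto a weak* dense, hence (being closed) full, subspace.
\item For (5) it follows from Theorem \ref{chco3} (5).
\end{itemize}
Once nc regularity is established, Theorem \ref{chco3} gives $q = \rho = (\theta^*)^{-1}$, which is the final assertion $\theta^* = q^{-1} = \rho^{-1}$.

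Next I handle the easy equivalences. Since $\theta^*$ is the adjoint of $\theta$, and the adjoint of a map between operator spaces is a complete isometry exactly when the map is a complete quotient map, (3) $\Leftrightarrow$ (4) follows from operator space duality, for example \cite[Section 1.4]{BLM}. Here $\theta$ is one-to-one with dense range by Proposition \ref{filler}, so a complete quotient map must be a surjective complete isometry. The statement (5) is just (3) rephrased, because $\rho = (\theta^*)^{-1}$ is the unique extension of $\delta$ and is a weak* homeomorphism by Theorem \ref{chco3}. For (2) $\Leftrightarrow$ (4), I use the identification $M_n(F) \cong$ w*CB$(E,M_n)$ from the introduction. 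The linear-extension of $f \in M_n(\bA(K))$ is exactly $\theta_n^{-1}(f)$, so (2) says precisely that $\|\theta_n^{-1}(f)\|_{\rm cb} = \|f\|$. Applying this at level $N = \aleph_0$, which contains all finite levels as corners, this is the statement that $\theta$ is a complete isometry at every level.

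The remaining step, and the only real content, is (1) $\Rightarrow$ (3). This uses the hypothesis $K_n \subseteq {\rm Ball}(M_n(E))$.
\begin{itemize}
\item Because $K_n$ lies in the ball, $\|\theta_n([\psi_{ij}])\| = \sup_{x \in K_m} \|[\psi_{ij}^{(m)}(x)]\| \leq \|[\psi_{ij}]\|_n$. The sup over $K_m$ computes the norm in $M_n(\bA(K))$, and the pairing is completely contractive. So $\theta$ is a complete contraction, and hence so is $\theta^* = q^{-1}$.
\item The hypothesis in (1) says that $q$ is also a complete contraction.
\item So $q$ is a complete isometry, which is (3).
\end{itemize}
Conversely, (3) immediately gives (1). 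The main point to verify carefully is the claim that the norm on $M_n(\bA(K))$ is the supremum over all levels $K_m$ of the amplified values, as in \cite{DK}, and that this supremum is dominated by the operator space pairing bound. I expect that to be the only place where any care is needed.
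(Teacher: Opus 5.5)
Your proposal is correct and follows essentially the same route as the paper: you reduce each of (1)--(5) to nc regularity via Theorem \ref{chco3}, handle (2)$\Leftrightarrow$(4)$\Leftrightarrow$(3)$\Leftrightarrow$(5) by duality bookkeeping, and prove (1)$\Rightarrow$(3) by noting that $K_m \subseteq {\rm Ball}(M_m(E))$ makes $\theta$, hence $\theta^* = q^{-1}$, a complete contraction, which together with $q$ being a complete contraction makes $q$ a complete isometry. One small wording fix: in your argument for (3), $\theta(F)$ is norm dense because $\theta(F)^\perp = \ker \theta^* = 0$; calling it ``weak* dense'' is inaccurate, since $\bA(K)$ need not be a dual space.
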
 

\begin{proof}  By the previous result any of (2)--(5) imply that the embedding of $K$ in  $E$ is nc regular, 
and $\theta^* = q^{-1} = \rho^{-1}$.
Then (5) is saying that $q = (\theta^*)^{-1}$ is a completely   isometric isomorphism, which implies (2).  
Clearly (4) is equivalent to (3), 
and these  imply (1), which implies (5). 
 Note that any nc affine continuous contractive (w.r.t.\ the sup norm) $f : K \to M_n$ extends to a contractive matrix in $M_n(V)
= CB((M_n)_*,V) = CB^\sigma(V^*,M_n)$.   Thus $f$ comes from a completely contractive (weak*) continuous $\tilde{f} :
E \to M_n$.
 It is then easy to see as in the last proof that (2)  
 is equivalent to (4), and that $\theta^* = q^{-1}$. 

 (1) $\Rightarrow$  (3) \  If $\Psi = [\psi_{ij}] \in {\rm Ball}(M_n(E_*))$ then we may regard 
 $\Psi$ as a complete contraction $E \to M_n$.   If $x \in K_m \subset {\rm Ball}(M_m(E))$
 it follows that $\| \Psi^{(m)}(x) \| \leq 1$.  Thus $\theta$  is a complete contraction on
 $F = E_*$.   Hence $\theta^* = q^{-1}$  is a complete contraction.  Thus (3) holds.  
  \end{proof}

\section{Real nc regularity} \label{rncr}  

In this section we use the real nc convexity notation and theory from \cite{BMcI}.  Thus $E_c$ is the Ruan complexification of the operator space $E$, and $K_c$ is the complexification from \cite{BMcI} of a nc convex set $K$.  We defined $c(x,y)$ in the introduction. 

 Sadly however the real case of nc regularity looks a bit more complicated in a certain regard.   
The issue is that it seems one cannot expect ncSpan$(K)$ to be helpful in the way it was in the complex case.  A slightly different approach is needed.

Let  $K$ be a real nc compact convex set embedded in a real dual operator space $E$, with $K$ lying  in a  closed nc hyperplane $H$ not passing through 0.
That is, there exists $x \in F = E_*$ such that $H_n$ consists of the matrices in $M_n(E)$ which $x$ maps to $\gamma \, I_n$, fixed $\gamma \in \bR, \gamma \neq 0$.  Then $E_c$ has a real subspace $W = E$ with $W + iW = E_c$ and $K \subset W$ as graded nc sets.  
Then $K_c$ lies in the nc hyperplane in $E_c$ defined by $x$.  Indeed if $x+iy \in K_c$ then $c(x,y) \in K$, and 
$f$ evaluated at $c(x,y)$ is $I \oplus I$.   Hence 
$f$ evaluated at $x$ and $y$ is $I$ and $0$ respectively, so that $f$ evaluated at $x+iy$ is $I$. 

We will say that the embedding of $K$ in $E$ is nc {\em preregular} if 
 $K$ lies  in a nc hyperplane as above, and $E_c = {\rm ncSpan}(K_c)$ at each level.     
 
For a cardinal $n$ we define as in  \cite{BMcI} the isometry $u_n = \frac{1}{\sqrt{2}} \begin{bmatrix} 
    1_n \\ -i \cdot 1_n 
\end{bmatrix}$ where $1_n$ is the $n$-dimensional identity operator.  We sometimes also write this as $u$. 
 Again we write  $\theta : F = E_* \to \bA(K)$ for the canonical `restriction map', defined analogously to 
the first paragraphs of Subsection \ref{cncc}.  We also define `linear-extensions' to $E$ as we did there.

  \begin{prop} \label{tore}     Let $K$ be a real nc compact convex set in  a dual real operator space $E = F^*$, 
 and suppose that $K_c$ is nc spanning in $E_c$ in either of the two senses in  Section {\rm \ref{ncs}}.  
 Then 
 \begin{enumerate}
        \item [{\rm (1)}]        $\theta$ is completely bounded and one-to-one. 
          \item [{\rm (2)}]    $\theta$ is a bicontinuous  isomorphism if and only if 
 $\theta(F) = A(K)$ at level 1. That is, if and only if every $f \in A(K)$ is  a 
 restriction of some element of $F = M_1(F)$. 
         \item [{\rm (3)}]  $\theta$  is a complete isomorphism if and only if 
 every $f \in \bA(K,M_N)$  has  a 
  weak* continuous completely bounded linear-extension 
 $E \to M_N$, or equivalently some element of $M_N(F)$.   Here $N = \aleph_0$.
  \end{enumerate}  
     \end{prop}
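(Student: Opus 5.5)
The plan is to mirror the proof of Proposition \ref{tode}, handling the real setting by passing to complexifications only where spanning is used. Recall that $\theta : F \to \bA(K)$ sends $\varphi$ to the nc function $(\varphi^{(n)})_{|K_n}$, and that $\bA(K)$ carries the sup-norm operator space structure at each matrix level.

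\emph{Step (1): injectivity.} Suppose $\varphi \in M_m(F)$ vanishes on $K$. Its complexification $\varphi_c$ then vanishes on $K_c$. To see this, note that $x+iy \in (K_c)_n$ means $c(x,y) \in K_{2n}$. Since $\varphi^{(2n)}$ commutes with the scalar matrix structure, $\varphi^{(2n)}(c(x,y)) = c(\varphi(x),\varphi(y))$, so $\varphi(x) = \varphi(y) = 0$. Now $\varphi_c$ is complex linear and respects matrix coefficients, i.e.\ $\varphi_c(a^* z b) = a^* \varphi_c(z) b$. Because $E_c = {\rm ncSpan}(K_c)$, it follows that $\varphi_c = 0$, and hence $\varphi = 0$, giving one-to-one.

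\emph{Step (1): complete boundedness.} Use the fact that $K$ is nc compact, hence nc bounded, by the real version of \cite[Proposition 2.5.3]{DK} (or the idea therein, valid for real nc convex sets as in \cite{BMcI}). This yields a constant $c$ with $\|x\| \leq c$ for all $x \in K_n$ and all $n$. Then, for $\varphi \in M_m(F)$,
\[
\|\theta_m(\varphi)\| = \sup_{n,\; x \in K_n} \|\varphi^{(n)}(x)\| \leq c\,\|\varphi\|_m,
\]
so $\theta$ is completely bounded.

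\emph{Steps (2) and (3): open mapping argument.} For (2), $\theta$ is a bounded injection of Banach spaces $F \to \bA(K)$. By the open mapping theorem it is bicontinuous precisely when it is surjective at level 1, i.e.\ when every element of $\bA(K)$ is the restriction of some element of $F$. One may also phrase this through $A(K_1)$, via the real analogue of \cite[Theorem 2.5.8]{DK} from \cite{BMcI}.

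For (3), apply the same reasoning to $\theta_N : M_N(F) \to M_N(\bA(K)) \cong \bA(K,M_N)$. It is a bounded injection, so it is an isomorphism iff it is onto. Via the identification w*CB$(E,M_N) \cong M_N(E_*)$ from the introduction (valid in the real case), surjectivity says that each $f \in \bA(K,M_N)$ has a weak* continuous completely bounded linear-extension. Taking $N = \aleph_0$ suffices: each finite $M_n(F)$ and $M_n(\bA(K))$ embed completely isometrically as corners of the level-$\aleph_0$ spaces, so the uniform bound on $\theta_N^{-1}$ passes to all $n \leq N$. Conversely, complete isomorphism immediately gives the extensions.

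\emph{Main obstacle.} The delicate point is the injectivity step. Spanning is assumed only for $K_c$ in $E_c$, not for $K$ in $E$, so one must check that vanishing on $K$ really forces vanishing of the complexification on $K_c$. I expect this to go through exactly as sketched, using the definition of $K_c$ via $c(x,y)$ and equivariance of amplified maps under scalar matrices.
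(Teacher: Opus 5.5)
Your proposal is correct and follows the paper's proof essentially step for step: injectivity by showing that $\varphi_c$ vanishes on $K_c$ via $c(x,y) \in K_{2n}$ and then using that $K_c$ nc spans $E_c$, complete boundedness from the automatic nc boundedness of $K$, and (2)--(3) by the open mapping theorem exactly as in Proposition \ref{tode}. One small caution: drop the aside about rephrasing (2) through $A(K_1)$, since in the real case $\bA(K)$ is not determined by level $1$ (e.g.\ for $K = {\rm ncS}(\mathbb{H})$ the set $K_1$ is a single point while $\bA(K) \cong \mathbb{H}$ is $4$-dimensional, and the relevant result in \cite{BMcI} operates at level $2$), though nothing in your argument depends on it.
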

        \begin{proof}  (1)   As in the proof of Proposition \ref{tode} nc boundedness of $K$ is automatic in the real case. It follows that $\| \theta_n(f) \| \leq \frac{1}{c} \| f \|_n$ for $f \in M_n(F)$, where $c$ is a positive bound on 
 $K_n$ in $M_n(E)$ for all $n$.  
 So 
 $\theta$ is completely bounded.  Suppose that $\theta(\psi) = 0$ for $\psi \in F$.   For $x + iy \in (K_c)_n$ we have $c(x,y) \in F_{2n}$
 so that $0 = \theta(\psi) (c(x,y))$.   Note that $(\psi_c)_{|K_c}(x+iy)$ equals $$\theta_c(\psi + i0)(x+iy) = \psi(x) + i \psi(y) 
 = (\theta(\psi))_c(x + iy) = u^* \theta(\psi)(c(x,y)) u = 0.$$   
 Here $u = \frac{1}{\sqrt{2}} [ 1 \; -i]^\tran$ as in e.g.\ \cite{BMcI}. 
 Since this is true for all $x + iy \in K_c$ and $K_c$ is nc spanning we have $\psi_c = 0$ and $\psi = 0$.
So  $\theta$ is  one-to-one.   
 
  (2) and (3) are just as in Proposition \ref{tode}.  \end{proof}

  We say that (nc preregularly embedded) $K$ is {\em nc regularly embedded} in $E$ if 
 $r = \theta^*$ is a complete isomorphism (that is, surjective and completely bicontinuous).  This is equivalent to $\theta$ 
 being a complete isomorphism, which we characterized above as a uniform  affine extension property.

  \bigskip
 
 {\bf Remarks.} 1) \ Suppose that 
  $\theta(x) = 1_{\bA(K)}$ for $x \in F$. 
  Then $x$ s a nc affine map 
 $E \to \cM$ whose value at $k \in K_n$ is $I_n$.
 We obtain a nc hyperplane $H$ 
 with $H_n$ the matrices in $M_n(E)$ which this map $x$ maps to $I_n$.  So $K \subset H$.
 Also the value of this map at $0_n$ is 0, so that $0_n  \notin H_n$.
 
 \medskip
 
 2) \ If  the embedding of a nc convex set $K$ in  a real dual operator space $E$ is
  nc preregular (resp.\ nc regular), one cannot deduce as we did in 
  Corollary \ref{wascoro}, that the embedding of $K_1$ in $E$ is preregular (resp.\ regular).   Indeed
  $K_1$ may be $(0)$, as it is in the case of the quaternions for example.   One would expect,
  in the light of a result from \cite{BMcI}, that the embedding of $K_2$ in $M_2(E)$ is appropriately preregular
  (resp.\ regular).
  Indeed 
  this is the case. Certainly $K_2$ spans  $M_2(E)$ by a variant of the $A,B,C,D$  trick in the 
  proof of Corollary   \ref{wascoro}.  Suppose that $\psi \in F = E_*$ defines a nc hyperplane $H$ containing $K$, with 
  $H_n$ consisting of the matrices in $M_n(E)$ which $\psi$ maps to $\gamma \, I_n$, for a fixed $\gamma \in \bR, \gamma \neq 0$.  
  Then $K_2$ lies in the   hyperplane in $M_2(E)$ defined by where the functional $[x_{ij} ] \mapsto \psi(x_{11})$ 
  equals $\gamma$. So the embedding of $K_2$ in $M_2(E)$ is preregular. If  the embedding of $K$ is nc regular, suppose that 
   $f \in A_{\bR}(K_2)$. Then $f \cdot I_2$ is level 2 of a nc function $g \in \bA(K)$ by 
\cite[Theorem 6.12]{BMcI}.   If $\tilde{g} \in F = E_*$ is
a linear-extension of $g$ then $\tilde{g}_2$ extends $f \cdot I_2$ to $M_2(E)$.  Then the 1-1 coordinate of $\tilde{g}_2$ extends $f$ to $M_2(E)$.  Thus  the embedding of $K_2$ in $M_2(E)$ is regular.
 
        \begin{prop} \label{dumar}  If $K = {\rm ncS}(V)$ for a real operator system $V$ then the  embedding $K \subset V^*$
        is real nc regular, and  the  embedding $K_c \subset V_c^*$
        is complex nc regular. 
        \end{prop}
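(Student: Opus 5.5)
We will prove the complex statement first and then use it to get the real one; this order matches the strategy announced in the introduction.

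\emph{The complex statement.} By \cite{BMcI} the complexification $V_c$ is a complex operator system and $K_c = {\rm ncS}(V)_c$ is identified with ${\rm ncS}(V_c)$, via complexifying real ucp maps $\varphi \mapsto \varphi_c$ on $V_c = V + iV$. Also $(V^*)_c \cong (V_c)^*$ completely isometrically and weak* homeomorphically. Hence the embedding $K_c \subset V_c^*$ is exactly the canonical embedding of ${\rm ncS}(V_c)$ in $(V_c)^*$. Proposition \ref{duma} then says this embedding is complex nc regular.

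\emph{The real statement.} We check the definition of real nc regularity directly, with $E = V^*$ and $F = V$.

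First, the hyperplane condition. The identity $1_V \in F$ maps every $x \in K_n = {\rm UCP}(V,M_n)$ to $I_n$. So $K$ lies in the nc hyperplane defined by $1_V$ with $\gamma = 1$, and this hyperplane does not pass through $0$.

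Second, the spanning condition. We need $E_c = {\rm ncSpan}(K_c)$ at each level, where $E_c = (V^*)_c = (V_c)^*$ and $K_c = {\rm ncS}(V_c)$. This was shown for complex operator systems in Section \ref{ncs}: by Wittstock's decomposition together with \cite[Lemma 2.2]{CE}, every element of $M_n(V_c^*) = CB(V_c,M_n)$ is a nc combination $c_1 x_1 c_1 - c_2 x_2 c_2 + i(c_3 x_3 c_3 - c_4 x_4 c_4)$ with $x_i \in (K_c)_n$. So the embedding is nc preregular.

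Third, $\theta : V \to \bA(K)$ must be a complete isomorphism. Here $\theta(v)$ is evaluation $x \mapsto x(v)$ at each level, which is exactly the canonical map $V \to \bA({\rm ncS}(V))$. By the real Kadison--Webster--Winkler duality of \cite{BMcI} (the real analogue of \cite[Theorem 2.5.8]{DK}), this map is a unital complete order isomorphism onto $\bA(K)$, and in fact a complete isometry. Hence $\theta$ is a complete isomorphism, and so is $r = \theta^*$. This is what the definition requires.

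The main obstacle is the third step. If the real duality $V \cong \bA({\rm ncS}(V))$ in \cite{BMcI} is stated only as an order isomorphism, not as surjectivity of evaluation onto all continuous nc affine functions, I would derive it from the complex case. Take $f \in \bA(K)$. Using $K_c$ and the map $u$, form its complexification $f_c \in \bA(K_c)$, defined by $f_c(x+iy) = u^* f(c(x,y)) u$. By the complex regularity just established, $f_c$ is evaluation at some $w = a + ib \in V_c$. Now compare $f_c$ with the conjugation symmetry of $K_c$, which comes from $c(x,-y)$ being unitarily conjugate to $c(x,y)$; this should force $b$ to act as zero, so $b = 0$. Then $f = \theta(a)$.

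Complete boundedness of the inverse then follows from the complex case as well. We have $M_n(V)$ sitting completely isometrically inside $M_n(V_c)$, and $\|f\|$ is comparable to $\|f_c\|$ (note $f$ is recovered from $f_c$ via $c(\cdot)$), so the bound on $\theta^{-1}$ transfers. The delicate points are checking that $f_c$ is well defined and nc affine on $K_c$, and carrying out the symmetry argument that gives $b = 0$.
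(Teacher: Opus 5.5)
Your proposal is correct and follows essentially the paper's route. You identify $K_c \subset V_c^*$ with the canonical embedding of ${\rm ncS}(V_c)$ (the paper phrases this through $\bA(K_c) = \bA(K)_c$ from \cite{BMcI}), so that Proposition \ref{duma} gives complex nc regularity and hence $E_c = {\rm ncSpan}(K_c)$; you take the hyperplane from $1_V$; and you get the extension property from the real duality $V \cong \bA({\rm ncS}(V))$ of \cite{BMcI}, which is what ``the rest is as in Proposition \ref{duma}'' means. In your fallback no conjugation-symmetry argument is needed: evaluating $f_c$ at the real points $\varphi + i0 \in (K_c)_n$, $\varphi \in K_n$, gives $f(\varphi) = \varphi(a) + i\varphi(b)$ with all three matrices real, so $f = \theta(a)$ immediately.
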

        \begin{proof}
       We already  established the hyperplane condition  in the last section 
 for the embeddings $K \subset E = V^*$ and $K_c \subset V_c^*$.  Clearly $K_c \subset V_c^*$ is stately with $W$ the copy of $V^*$ in $V_c^*$.   Indeed since $\bA(K_c) = \bA(K)_c$ \cite{BMcI} we have from Proposition  \ref{duma} that 
 the  embedding $K_c \subset V_c^*$     is complex nc regular.    In particular  $E_c = {\rm ncSpan}(K_c)$.
  The rest is as in  Proposition  \ref{duma}. 
 \end{proof}

  \begin{thm} \label{chco3r}  Let $K$ be a real nc  compact convex set  in   
  a real dual operator space $E$, such that $E_c = {\rm ncSpan}(K_c)$.  
  The following are equivalent:
    \begin{enumerate}
        \item [{\rm (1)}] The embedding of $K$ in  $E$ is nc regular.
        \item [{\rm (2)}]    For some cardinal $N \geq \aleph_0$, every $f \in M_n(\bA(K))$, that is, every nc affine continuous $f : K \to M_n$, has a  weak* continuous linear-extension to $E$ if $n \leq N$.  
         \item [{\rm (3)}]  The embedding of $K$ in  $E$ is real nc preregular, and every $f \in \bA(K)$ 
  has a  weak* continuous linear-extension to $E$.  
     \item [{\rm (4)}]  The restriction map $\theta  : E_* \to \bA(K)$ is a complete isomorphism (or equivalently  $(\theta^*)^{-1} : E \to \bA(K)^*$  is a complete isomorphism and weak* homeomorphism). 
     \item [{\rm (5)}] The map $\delta : K \to \bA(K)^*$ extends (necessarily uniquely)  to a 
complete isomorphism and weak* homeomorphism $\rho : E \to \bA(K)^*$.  
       \item [{\rm (6)}]  The canonical embedding of $K_c$ in  the operator space 
       complexification $E_c$ is (complex) nc regular.
  \end{enumerate}   
In this case the map $q$ above is $\rho$, and also equals $(\theta^*)^{-1}$.  Also $F = E_*$ may be made 
  into an abstract real operator system (with possibly an equivalent norm) 
  with $M_n(F_+)$  defined to be the $M_n$-valued linear maps on $E$ which are positive (as a nc function) on $K$, and with `identity'  the unique such map on $E$ that is 1 on $K$.  This operator system is unitally completely order isomorphic to $\bA(K)$  via $\theta$. 
   \end{thm}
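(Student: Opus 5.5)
The plan is to route everything through the complex theory, Theorem \ref{chco3}, applied to $K_c \subset E_c$. Proposition \ref{tore} supplies the real analogue of Proposition \ref{tode}. That gives (1) $\Leftrightarrow$ (4) $\Leftrightarrow$ (2) exactly as in the complex proof: $\theta$ is completely bounded and one-to-one, and $\theta_N$ is surjective at level $N=\aleph_0$ precisely when the extension property in (2) holds. The implication (5) $\Leftrightarrow$ (4) is the same predual computation as before, $\rho_*^{-1}(\psi)(k)=\langle k,\psi\rangle$, so the inverse predual of $\rho$ is $\theta$. The equality $q=\rho=(\theta^*)^{-1}$ follows by the uniqueness argument of Proposition \ref{filler}, applied to $K_c$ and then restricted to $E$. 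Clearly (1) and (2) give (3).

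The new content is (6) and (3) $\Rightarrow$ (2). First identify $(E_c)_* = F_c$ and $\bA(K_c)=\bA(K)_c$ (the latter from \cite{BMcI}, as used in Proposition \ref{dumar}). Under these identifications, the complex restriction map $\theta^{K_c}:F_c\to\bA(K_c)$ equals the complexification $\theta_c$. I would check this on $\psi+i\psi'$ evaluated at $x+iy\in K_c$, using $u^*\theta(\psi)(c(x,y))u=\psi(x)+i\psi(y)$ as in the proof of Proposition \ref{tore}. Since a real linear map is a complete isomorphism if and only if its Ruan complexification is, (4) $\Leftrightarrow$ (6) follows from Theorem \ref{chco3}, (1) $\Leftrightarrow$ (4), applied to $K_c$. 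Here the hypothesis $E_c={\rm ncSpan}(K_c)$ is exactly the spanning hypothesis that theorem needs. Statelyness of $K_c$ in $E_c$ holds with $W=E$, together with the hyperplane noted at the start of this section.

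For (3) $\Rightarrow$ (2), I will pass to (3) for $K_c$ and then invoke Theorem \ref{chco3}, (3) $\Rightarrow$ (2). Let $g=f+ih\in\bA(K_c)=\bA(K)_c$ with $f,h\in\bA(K)$. If $\tilde f,\tilde h\in F$ are weak* continuous real linear-extensions, then $\tilde f+i\tilde h\in F_c$ should be a weak* continuous linear-extension of $g$ to $E_c$. This is the step I expect to be the main obstacle. The identification $\bA(K)_c\cong\bA(K_c)$ is given via the map $x+iy\mapsto u^*(\cdot)(c(x,y))u$, and one must check that it matches $(\tilde f+i\tilde h)$ evaluated at $x+iy$, including the cross terms $\tilde f(y),\tilde h(x)$. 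My first attempt is to compute $u^*\,{\rm diag}\text{-free }c(\tilde f+i\tilde h)(c(x,y))\,u$ directly, using the equivariance of nc affine maps under $c$.

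Once that holds, $K_c$ satisfies (3) of Theorem \ref{chco3} and so is nc regular, giving (6). Then (6) $\Rightarrow$ (4) as above, and (4) $\Rightarrow$ (2). The final operator system assertions transfer through $\theta$ exactly as in Theorem \ref{chco3}, using the real Kadison–Webster–Winkler duality from \cite{BMcI}.
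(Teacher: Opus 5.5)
Your proposal is correct and is essentially the paper's proof. The key step (3) $\Rightarrow$ (2) is done exactly as there: split $g=f+ih$ via \cite[Theorem 3.9]{BMcI}, extend by $\tilde f+i\tilde h$, apply Theorem \ref{chco3} to $K_c$, and descend via $\theta_{\bC}(\varphi+i0)=\theta(\varphi)_c$. The cross-term check you flag as the main obstacle is routine: by linearity both sides equal $\tilde f(x)-\tilde h(y)+i(\tilde f(y)+\tilde h(x))$. The main variation is that you get (4) $\Leftrightarrow$ (6) from $\theta_{\bC}=\theta_c$ together with ``$T$ is a complete isomorphism iff $T_c$ is''. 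The paper instead proves (2) $\Leftrightarrow$ (6) by writing down the extensions $(\tilde g+i\tilde h)_c$ and $\pi_n\circ\widetilde{f_c}\circ\iota_E$ directly.

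One caution, inherited from the paper's own wording: $K_c$ need not lie in $W=E$. For $K={\rm ncS}(M_2(\bR))$, the vector state of $M_2(\bC)$ at $(1,i)/\sqrt{2}$ lies in $(K_c)_1$ but takes the value $-i$ on a real matrix. This is harmless, since the argument only needs level-one regularity of $(K_c)_1$. Theorem \ref{chco2} supplies that directly from spanning and the level-one extension property, and its proof constructs a suitable $W$ itself.
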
 

 \begin{proof}  (1) $\Rightarrow$ (2) \ If the embedding of $K$ in  $E$ is nc regular then we may assume that $E = \bA(K)^*$, so this follows from Proposition  \ref{dumar} .  
 
 (2) $\Leftrightarrow$  (4) \ Proposition  \ref{tore}.
 
 (2) $\Rightarrow$  (1) \ This follows from 
  Proposition  \ref{tore}  as in the matching part of the proof in Theorem \ref{chco3}.   Similarly for $q = (\theta^*)^{-1}$. 
 
It is clear that (1) and (2) imply (3).  The equivalence with (5) is as in Theorem \ref{chco3}. 

(3) $\Rightarrow$  (2) \ If (3) holds then the embedding of $K_c$ in  $E_c$ is stately, with $W = E$ in the definition of stately. 
It is thus complex nc preregular by considerations at the start of this section.
Also, every $f \in \bA(K_c)$ 
  has a  weak* continuous extension to $E_c$.  Indeed by \cite[Theorem 3.9]{BMcI}, we may write $f = f_1 + i f_2$ with $f_k \in \bA(K)$.
By hypothesis each $f_k$ has a  weak* continuous linear-extension $\varphi_k$ to $E$.   Then $\varphi_1 + i \varphi_2$ 
(or $(\varphi_1)_c + i (\varphi_2)_c$)  is 
a weak* continuous linear-extension of $f$ to $E_c$.   Thus by Theorem   \ref{chco3} we have that $K_c$ is  complex nc regular in $E_c$.
Thus the restriction map $\theta_{\bC} : F_c \to \bA(K_c)$ is a completely bounded isomorphism.  Hence so is
 the restriction map $\theta: F \to \bA(K)$ since $\theta_{\bC}(\varphi + i0) = \theta(\varphi)_c$ for $\varphi \in F$.  One way to see 
 the latter 
 is that 
 $$\theta_{\bC}(\varphi + i0)( x+iy) = \varphi(x) + i \varphi(y) = (\theta(\varphi))_c(x+iy), \qquad x + iy \in K_c .$$ 
 Indeed as in the proof of Proposition \ref{tore} we have  $$\theta(\varphi)_c(x+iy) = u^* \theta(\varphi)(c(x,y)) u = u^* c(\varphi(x), \varphi(y))  u = \varphi(x) + i \varphi(y)$$
as needed. 

(2) $\Rightarrow$  (6) \ Suppose that  $f : K_c \to M_n(\bC)$ is a nc affine continuous function.
That is, $f \in M_n(\bA_{\bC}(K_c)) \cong M_n(\bA_{\bR}(K)_c) = M_n(\bA_{\bR}(K))_c$.
Write $f = g_c + i h_c$ for $g, h \in M_n(\bA_{\bR}(K))$, that is for $g, h : K \to M_n(\bR)$  nc affine continuous.
Each has a weak* continuous completely bounded  linear-extension $\tilde{g}, \tilde{h} : E  \to M_n(\bR)$.
Then  $\tilde{g}_c + i \tilde{h}_c = (\tilde{g}+ i \tilde{h})_c$ is a weak* continuous completely bounded  linear-extension of $f$.
Indeed if $x + iy \in (K_c)_m$ then 
$$((\tilde{g} + i \tilde{h})_c)_m(x+iy) = u^* (\tilde{g} + i \tilde{h})(c(x,y)) u 
= u^* (g_{2m}(c(x,y)) + i h_{2m}(c(x,y))) u$$
which equals $(g_c + i h_c)_m(x+iy) =   f_m(x+iy).$ Thus (6) follows from  Theorem \ref{chco3}.

(6) $\Rightarrow$ (2) \ Suppose that  $f : K \to M_n(\bR)$ is a real nc affine continuous function.
Then $f_c : K_c \to M_n(\bC)$ is a complex nc affine continuous function by \cite[Lemma 3.2]{BMcI}.
Hence there is a weak* continuous completely bounded  linear-extension $\widetilde{f_c} : E_c \to M_n(\bC)$.
Then $\pi_n \circ \widetilde{f_c} \circ \iota_E : E \to M_n(\bR)$ is a real weak* continuous completely bounded  map.
Also it is a linear-extension of $f$ since  $(\pi_n ( \widetilde{f_c} ( \iota_E(k))) = f(k)$ for $k \in K_n$.

The last assertions are easy, similar to the ones in Theorems \ref{chco} and \ref{chco2}.  
\end{proof}  

{\bf Remark.} If $K$ is real nc preregularly embedded then we  show that $\theta$ has  dense range  in $\bA(K)$.  Indeed by the start of the proof that 
(3) $\Rightarrow$  (2) we have that the embedding of $K_c$ in  $E_c$ is nc preregular.   Thus 
the restriction map $\theta_{\bC} : F_c \to \bA(K_c)$ has dense range.   
Thus $\theta_{\bC}^*$ is one-to-one.  However by the proof above
$\theta_{\bC} = \theta_c$, and so $\theta_{\bC}^* = (\theta_c)^* = (\theta^*)_c$.   
The last equality is a general fact about the dual of the complexification  of any Banach space: if $u : X \to Y$ then 
$(u_c)^*(f + ig)(x+iy)$ equals $$(f + ig)(u(x)+iu(y)) = (u^*(f) + i u^*(g))(x+iy) = (u^*)_c(f+ig)(x+iy),$$ for $x, y \in X, f, g \in Y^*.$
 So $\theta^*$ is one-to-one, and  so 
$\theta$ has  dense range  in $\bA(K)$.

\begin{cor} \label{filler2}  Let $K$ be a nc compact convex set in  a dual operator space $E = F^*$.
 If  $K$ is  nc regularly embedded in $E$, then the map $\delta : K \to \bA(K)^*$ extends  to a linear complete isomorphism $q : E \to A(K)^*$.     Indeed $q_n : M_n(E) \to CB(\bA(K),M_n)$ 
is a  linear isomorphism for all $n$.  We also have $q^{-1} = \theta^*$ for $\theta$ as above, and this is 
completely bounded.  \end{cor}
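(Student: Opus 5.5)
The plan is to read the corollary off from Theorem \ref{chco3r}, together with the complexification identities recorded in the proof of (3) $\Rightarrow$ (2) there and in the subsequent Remark. The corollary sits in the real section, so $K$ is a real nc compact convex set. The statement assumes nc regularity rather than mere preregularity, and this removes the need to build $q$ from a real nc span; I will simply define $q := (\theta^*)^{-1}$.

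First, by the definition of real nc regularity, $\theta : F \to \bA(K)$ is a complete isomorphism; equivalently this is condition (4) of Theorem \ref{chco3r}. Its adjoint $\theta^* : \bA(K)^* \to E$ is then a surjective complete isomorphism and weak* homeomorphism. Each amplification $(\theta^*)_n$ is a linear bijection $CB(\bA(K),M_n) \cong M_n(\bA(K)^*) \to M_n(E)$, with bounded inverse. Setting $q = (\theta^*)^{-1}$, the map $q_n : M_n(E) \to CB(\bA(K),M_n)$ is a linear isomorphism for every $n$, and $q^{-1} = \theta^*$ is completely bounded. This is immediate, since it is the adjoint of the completely bounded $\theta$; alternatively it follows from Proposition \ref{tore} (1).

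Second, I check that $q$ extends $\delta$. For $x \in K_n$, $f \in \bA(K)$ and $\varphi = \theta^{-1}(f) \in F$, the computation used before Proposition \ref{filler} applies verbatim in the real case:
$$\langle \varphi , \theta^*(\delta_x) \rangle = \delta_x(\theta(\varphi)) = \langle \varphi, x\rangle .$$
Hence $\theta^*(\delta_x) = x$, that is, $q(x) = \delta_x$. This is exactly the identity $r_n(\delta_x) = x$ noted earlier, and it matches the assertion $q = \rho = (\theta^*)^{-1}$ in Theorem \ref{chco3r}.

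The only delicate point is uniqueness of the extension, which the statement does not actually claim. In the real case $K$ need not nc span $E$ directly, as Remark 2 notes with $K_1 = (0)$. If uniqueness were wanted, I would pass to $E_c = {\rm ncSpan}(K_c)$. A linear map agreeing with $\delta$ on $K$ has a complexification agreeing with $\delta$ on $K_c$, using $u^* c(x,y) u$. Uniqueness in $E_c$ then comes from Proposition \ref{filler}, and restricting to $E$ gives uniqueness there. The identity $(\theta_c)^* = (\theta^*)_c$ from the preceding Remark ensures that the complex and real $q$'s are compatible.
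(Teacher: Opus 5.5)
Your proof is correct and is essentially what the paper intends. The paper gives no separate proof of this corollary: it is read off from the definition of real nc regularity together with Theorem \ref{chco3r} (1)$\Rightarrow$(5), where $q=\rho=(\theta^*)^{-1}$, and the extension property rests on the same pairing identity $\theta^*(\delta_x)=x$ that you use (the identity $r_n(\delta_x)=x$ from the complex case).
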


     The proof of the following (rather superficial result) is the same as that of Corollary \ref{chco24}:

 \begin{cor} \label{chco25} For a real dual operator space $E$ and a nc compact convex set $K$ such that $E = {\rm ncSpan}(K)$, with $K$ contained as a nc set in $({\rm Ball}(M_n(E)))$, the following are equivalent:
      \begin{enumerate}
        \item [{\rm (1)}] The embedding of $K$ in  $E$ is nc regular, 
         and the map $q$ above is a complete contraction. 
  \item [{\rm (2)}]  For some cardinal $N \geq \aleph_0$, and for every $n \leq N$, every  $f \in M_n(\bA(K))$, that is, every nc affine continuous $f : K \to M_n$, has a  weak* continuous linear-extension to $E$ whose 
 completely bounded norm  is $\| f \|$. 
   \item [{\rm (3)}]   The map  $\theta^*$ is a surjective complete isometry. 
   \item [{\rm (4)}]   The map $\theta$ above is  a surjective completely  isometric isomorphism between $F = E_*$ and $\bA(K)$.  
   \item [{\rm (5)}]   The map $\delta : K \to A(K)^*$ extends (necessarily uniquely)  to a weak* homeomorphic linear complete isometry  $\rho : E \to \bA(K)^*$  (both spaces with the weak* topology).
 \end{enumerate} 
  In this case   $\theta^* = \rho^{-1} = q^{-1}$.   \end{cor}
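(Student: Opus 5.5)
The plan is to follow the proof of Corollary \ref{chco24} essentially verbatim, with Theorem \ref{chco3r} used in place of Theorem \ref{chco3}. The hypothesis $E={\rm ncSpan}(K)$ should be read as in the real setting, namely $E_c={\rm ncSpan}(K_c)$.

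\emph{Step 1: each of (2)--(5) gives nc regularity.} Each condition yields condition (2), (4) or (5) of Theorem \ref{chco3r}; condition (3) does so after dualizing. Hence each of (2)--(5) implies that $K$ is nc regularly embedded in $E$. By that theorem and Corollary \ref{filler2} we then have $\theta^*=q^{-1}=\rho^{-1}$, with $q$ a complete isomorphism and a weak* homeomorphism.

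\emph{Step 2: the easy equivalences.} Once $q=\rho=(\theta^*)^{-1}$ is known, the remaining equivalences are formal.
\begin{enumerate}
\item[(a)] (3) $\Leftrightarrow$ (4): a weak* continuous map is a surjective complete isometry exactly when its preadjoint is.
\item[(b)] (5) says $q$ is a complete isometry, which is (3).
\item[(c)] (2) $\Leftrightarrow$ (4): under the identification $M_n(F)\cong w^*CB(E,M_n)$ recalled in the introduction, the norm of $\theta_n(\Psi)$ is the sup norm of the restricted nc function. Condition (2) says that every $f\in M_n(\bA(K))$ has a preimage under $\theta_n$ of the same norm. Since $\theta$ is injective (Proposition \ref{tore}), this is exactly the statement that $\theta_n$ is a surjective isometry for all $n\le N$, and $N\ge\aleph_0$ suffices to get all finite levels. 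Conversely, (4) gives (2) directly.
\item[(d)] (3) gives (1), since $q^{-1}$ is then a complete isometry, so $q$ is a complete contraction.
\end{enumerate}

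\emph{Step 3: (1) $\Rightarrow$ (3), the only non-formal step.} Take $\Psi=[\psi_{ij}]\in{\rm Ball}(M_n(E_*))$ and view it as a weak* continuous complete contraction $E\to M_n$. For $x\in K_m\subseteq{\rm Ball}(M_m(E))$ we get $\|\Psi^{(m)}(x)\|\le 1$. So the sup-norm of $\theta_n(\Psi)$ over $K$ is at most $1$, and $\theta$ is a complete contraction; hence $\theta^*=q^{-1}$ is a complete contraction. Combined with the assumption that $q$ is a complete contraction, $q$ is a surjective complete isometry, which is (3).

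The one point to check, and the likely obstacle, is that in the real setting the norm on $\bA(K)$ and its matrix levels is the sup norm over all levels $K_m$. That is, levels such as $K_2$ must carry the information, since $K_1$ may be trivial, for example $K_1=(0)$ for the quaternions. This is the convention in \cite{BMcI}. With it the estimate in Step 3 holds unchanged, and the argument is level-independent.
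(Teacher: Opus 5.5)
Your proposal is correct and follows the paper's route. The paper's proof just says it is the same as that of Corollary \ref{chco24}: Theorem \ref{chco3r} gives nc regularity and $\theta^*=q^{-1}=\rho^{-1}$ from any of (2)--(5), the remaining equivalences are formal, and (1) $\Rightarrow$ (3) is done exactly as you do, using $K_m\subseteq{\rm Ball}(M_m(E))$ to make $\theta$, hence $\theta^*=q^{-1}$, a complete contraction. Your reading of the spanning hypothesis as $E_c={\rm ncSpan}(K_c)$ is also the one consistent with Theorem \ref{chco3r}.
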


 {\bf Acknowledgement}: We thank Damon Hay and Arianna Cecco for some questions and comments. 
In a sequel paper  \cite{BP2} we refine some of our regularity results in the present paper using classical and noncommutative base norm spaces.   For example,   For example, if $V$ is an LCTVS  and if $K$ is a compact convex set 
which spans $V$ and lies in a hyperplane in $V$ not passing through 0 (that is, $K$ is preregular in $E$) 
then $V$ is canonically isomorphic to $A(K)^*$ via a continuous  isomorphism  which
is a homeomorphism on bounded sets.
Similarly in the nc case: If $V$ is a complex  $*$-LCTVS, and if $K$ is a selfadjoint compact matrix convex set 
such that $K_1$  spans $V_{\rm sa}$ and lies in a hyperplane not passing through 0 (that is, $K_1$ is preregular in $V_{\rm sa}$) , 
then $\bA(K,\bC)^* \cong V$ via a continuous   selfadjoint  isomorphism  which
is a homeomorphism on bounded sets. 

 \bigskip

  {\bf Funding declaration:} This project was partially supported by National Science Foundation grant DMS-2154903.

\end{document}